\numberwithin{equation}{section} 
\newtheorem{lemma}{Lemma}[section]
\newtheorem{corollary}[lemma]{Corollary}
\newtheorem{proposition}[lemma]{Proposition}
\newtheorem{theorem}[lemma]{Theorem}
\theoremstyle{definition}
\newtheorem{definition}[lemma]{Definition}
\newtheorem{remark}[lemma]{Remark}
\newlist{thm_enum}{enumerate}{1}
\setlist[thm_enum]{label=\normalfont(\alph*)}
\newlist{def_enum}{enumerate}{1}
\setlist[def_enum]{label=\normalfont(\roman*)}
\newlist{equiv_enum}{enumerate}{1}
\setlist[equiv_enum]{label=\normalfont(\roman*)}
\newcommand{\IQ}{\mathbb{Q}}
\newcommand{\IN}{\mathbb{N}}
\newcommand{\IR}{\mathbb{R}}
\newcommand{\IC}{\mathbb{C}}
\renewcommand{\epsilon}{\varepsilon}
\renewcommand{\phi}{\varphi}
\newcommand{\abs}[1]{\left\lvert#1\right\rvert}
\newcommand{\norm}[1]{\left\lVert#1\right\rVert}
\newcommand{\biggnorm}[1]{\biggl\lVert#1\biggr\rVert}
\newcommand{\R}[2][\empty]{
	\ifthenelse{\equal{#1}{\empty}}
		{\mathcal{R}\left\{#2\right\}}
		{\mathcal{R}_{#1}\left\{#2\right\}}
}
\renewcommand{\Im}{\operatorname{Im}}
\DeclareMathOperator{\linspan}{span}
\DeclareMathOperator{\Id}{Id}
\DeclareMathOperator{\diag}{diag}
\begin{document}

\title{On the structure of semigroups on $L_p$ with a bounded $H^{\infty}$-calculus}

\begin{abstract}
	We show that a bounded analytic semigroup on a reflexive $L_p$-space has a bounded $H^{\infty}(\Sigma_{\theta})$-calculus for some $\theta < \frac{\pi}{2}$ if and only if the semigroup can be obtained, after restricting to invariant subspaces, factorizing through invariant subspaces and similarity transforms, from a bounded analytic semigroup on some bigger $L_p$-space which is positive and contractive on the real line.
\end{abstract}

\author{Stephan Fackler}
\address{Institute of Applied Analysis, University of Ulm, Helmholtzstr. 18, 89069 Ulm}
\email{stephan.fackler@uni-ulm.de}
\thanks{The author was supported by a scholarship of the ``Landesgraduiertenförderung Baden-Württemberg''. Moreover, he wants to thank the anonymous referee for his valuable suggestions.}
\keywords{bounded $H^{\infty}$-calculus, $p$-matrix normed spaces, positive semigroups}
\subjclass[2010]{Primary 47A60; Secondary 47D06, 46L07, 47L25.}

\maketitle

\section{Introduction}\label{sec:introduction}
	It is a central task in operator theory to understand the structure of various functional calculi associated to certain classes of operators. In this work we study the structure of the holomorphic functional calculus for pseudo-sectorial operators on reflexive $L_p$-spaces. Here a densely defined linear operator $A\colon X \supset D(A) \to X$ with domain $D(A)$ on a Banach space $X$ is called \emph{pseudo-sectorial} if there exists an $\omega \in (0, \pi)$ such that
	\begin{equation}
		\label{sectorial}
		\tag{$S_{\omega}$}
		\sigma(A) \subset \overline{\Sigma_{\omega}} \qquad \text{and} \qquad \sup_{\lambda \not\in \overline{\Sigma_{\omega + \epsilon}}} \norm{\lambda (\lambda - A)^{-1}} < \infty \quad \text{for all } \epsilon > 0,
	\end{equation}
	where $\Sigma_{\omega} \coloneqq \{ z \in \IC \setminus \{0\}: \abs{\arg z} < \omega \}$ is the open sector in the complex plane of opening angle $\omega$ centered on the real line and $\sigma(A)$ denotes the spectrum of $A$. One defines the \emph{sectorial angle of $A$} as $\omega(A) \coloneqq \inf \{ \omega: \text{\eqref{sectorial} holds} \}$.
	
	Pseudo-sectorial operators have a natural holomorphic functional calculus. For $\theta \in (0, \pi)$ let
		\begin{align*} 
			& H_0^{\infty}(\Sigma_{\theta}) \coloneqq \left\{ f\colon \Sigma_{\theta} \to \IC \text{ analytic}\colon \abs{f(\lambda)} \le C \frac{\abs{\lambda}^{\epsilon}}{(1 + \abs{\lambda})^{2\epsilon}} \text{ on } \Sigma_{\theta} \text{ for } C, \epsilon > 0 \right \}, \\
			& H^{\infty}(\Sigma_{\theta}) \coloneqq \{ f\colon \Sigma_{\theta} \to \IC \text{ analytic and bounded} \}
		\end{align*}
	endowed with the norm of uniform convergence. Then for a pseudo-sectorial operator $A$ on some Banach space $X$ and $f \in H_0^{\infty}(\Sigma_{\theta})$ we define
		\[ f(A) = \frac{1}{2\pi i} \int_{\partial \Sigma_{\theta'}} f(\lambda) (\lambda - A)^{-1} \, d\lambda \qquad (\omega(A) < \theta' < \theta).  \]
	This is well-defined by the growth estimate on $f$ and by the invariance of the contour integral and induces an algebra homomorphism from $H^{\infty}_0(\Sigma_{\theta})$ into the Banach algebra $\mathcal{B}(X)$ of all bounded operators on $X$. We say that $A$ has a \emph{bounded $H^{\infty}(\Sigma_{\theta})$-calculus} for some $\theta > \omega(A)$ if there is a constant $C_{\theta} \ge 0$ such that
		\[ \norm{f(A)}_{\mathcal{B}(X)} \le C_{\theta} \sup_{z \in \Sigma_{\theta}} \abs{f(z)} \qquad \text{for all } f \in H^{\infty}_0(\Sigma_{\theta}). \] 
	The infimum of those $\theta$ for which the above estimate holds for some $C_{\theta} \ge 0$ is denoted by $\omega_{H^{\infty}}(A)$. The concept of a bounded $H^{\infty}$-calculus goes back to~\cite{McI86} for Hilbert spaces and its study was extended to Banach spaces in~\cite{CDMY96}. Moreover, the terminology is justified by the fact that if $A$ additionally has dense range -- in this case we call $A$ \emph{sectorial} -- the functional calculus can be extended to $H^{\infty}(\Sigma_{\theta})$ (see Section~\ref{sec:functional_calculus}). 
	
	Both for theory and applications it is important to determine general classes of pseudo-sectorial operators which have a bounded $H^{\infty}$-calculus. Such classes are given by certain analytic semigroups, i.e. strongly continuous mappings $z \mapsto T(z)$ with $T(0) = \Id$ from a sector $\Sigma \cup \{0\}$ into $\mathcal{B}(X)$ for some Banach space $X$ satisfying the semigroup identity $T(z_1 + z_2) = T(z_1)T(z_2)$ for all $z_1, z_2 \in \Sigma$. Note that analytic semigroups on a Banach space $X$ are in one-to-one correspondence with pseudo-sectorial operators on $X$ of angle smaller than $\frac{\pi}{2}$, which is formally given by $A \mapsto (e^{-zA})$. In this case the operator $-A$ corresponding to $(T(z))$ is called the generator of the semigroup $(T(z))$. 
	
	A famous criterion states that the negative generator of a bounded analytic semigroup $(T(z))$ on a Hilbert space $H$ with $\norm{T(t)} \le 1$ for all $t \ge 0$ has a bounded $H^{\infty}(\Sigma_{\theta})$-calculus for some $\theta < \frac{\pi}{2}$. Conversely, every semigroup $(T(t))_{t \ge 0}$ on $H$ whose negative generator has a bounded $H^{\infty}(\Sigma_{\theta})$-calculus for some $\theta < \frac{\pi}{2}$ is similar to a contractive semigroup, i.e. there exists an invertible operator $S \in \mathcal{B}(H)$ such that $\norm{S^{-1} T(t) S} \le 1$ for all $t \ge 0$. In other words, there exists an isomorphic Hilbert space such that the semigroup is contractive on the real line in this Hilbert space. This semigroup variant of the famous Halmos similarity problem was proved by C.~Le Merdy~\cite{Mer98}.
	
	Leaving the Hilbert space setting, on $L_p$ for $p \in (1, \infty)$ we still have an important class of analytic semigroups whose negative generators have a bounded $H^{\infty}(\Sigma_{\theta})$-calculus for some $\theta < \frac{\pi}{2}$: by~\cite[Remark~4.9c)]{Wei01} the negative generator of a bounded analytic semigroup on $L_p$ that is contractive and positive in the natural order of $L_p$ on the positive real line has a bounded $H^{\infty}(\Sigma_{\theta})$-calculus for some $\theta < \frac{\pi}{2}$. 
	
	In this paper we show the following converse of Weis' result, generalizing Le Merdy's theorem from the Hilbert space case to an $L_p$-setting: for a semigroup $(T(t))_{t \ge 0}$ on $L_p$ ($p \in (1, \infty)$) whose negative generator has a bounded $H^{\infty}(\Sigma_{\theta})$-calculus for some $\theta < \frac{\pi}{2}$ there exist a bounded analytic semigroup $(R(z))$ on some bigger $L_p$-space, which is positive and contractive on the real line, $(R(z))$-invariant closed subspaces $N \subset M$ and an isomorphism $S$ from $L_p$ onto $M/N$ such that $T(t) = S^{-1} \hat{R}(t) S$ for all $t \ge 0$, where $(\hat{R}(t))_{t \ge 0}$ is the induced semigroup on $M/N$. An analogous result is also obtained in Theorem~\ref{thm:generic} for semigroups on subspace-quotients of general UMD-Banach lattices (UMD stands for unconditional martingale difference property).
	
	In other words, the class of bounded analytic semigroups on $L_p$ which are positive and contractive on the real line is generic for all semigroups on $L_p$ whose negative generators have a bounded $H^{\infty}(\Sigma_{\theta})$-calculus for some $\theta < \frac{\pi}{2}$. That is, modulo the operations (1) similarity transforms, (2) passing to an invariant subspace and (3) factorizing through an invariant subspace,
	which all preserve the boundedness of the functional calculus, all such semigroups are bounded analytic semigroups being positive and contractive on the real line.
	
\section{\texorpdfstring{$H^{\infty}$-}{Holomorphic functional }calculus}\label{sec:functional_calculus}

We give further comments on sectorial operators and their associated holomorphic functional calculus. For more details we refer to the presentations in~\cite{KunWei04} and \cite{DHP03}. 			
	
	Let $A$ be a pseudo-sectorial operator with $\omega(A) < \frac{\pi}{2}$ and $(T(z))$ be the analytic semigroup generated by $-A$. Then one can plug $f_z\colon \lambda \mapsto e^{-\lambda z} - 1/(1+\lambda)$ into the functional calculus for $z \in \Sigma_{\pi/2 - \omega(A)}$. For such $z$ one can show that $f_z(A) = T(z) - (1 + A)^{-1}$ (this follows, for example, from~\cite[Theorem~9.6]{KunWei04}), that is the functional calculus reproduces the bounded analytic $C_0$-semigroup generated by $-A$. 
	
	Now let $A$ be a sectorial operator with a bounded $H^{\infty}(\Sigma_{\theta})$-calculus, which means that we additionally assume that $A$ has dense range. Then the holomorphic functional calculus defined above can be continued to a Banach algebra homomorphism $H^{\infty}(\Sigma_{\theta}) \to \mathcal{B}(X)$. Moreover, this extension is unique under some mild regularity assumptions~\cite[Remark~9.7]{KunWei04}. The extension can be constructed as follows~\cite[§9]{KunWei04}:

For $n \ge 2$ let $\rho_n(\lambda) \coloneqq \frac{n}{n+\lambda} - \frac{1}{1 + \lambda n} \in H_0^{\infty}(\Sigma_{\theta})$. Then one has $\rho_n(A) = n(n+A)^{-1} - \frac{1}{n}(\frac{1}{n} + A)^{-1}$, which are uniformly bounded in $n \in \IN$ by the pseudo-sectoriality of $A$. Moreover, for all $f \in H^{\infty}(\Sigma_{\theta})$ one has $(f\rho_n)(\lambda) \to f(\lambda)$ for all $\lambda \in \Sigma_{\theta}$ and $\rho_n(A)x \to x$ for all $x \in X$. The extension can then be defined as
	\[ f(A)x \coloneqq \lim_{n \to \infty} (f \rho_n)(A)x \quad \text{for all } x \in X. \]

If $X$ is reflexive, then one can always decompose a pseudo-sectorial operator $A$ as $A = \begin{pmatrix} A_{00} & 0 \\ 0 & 0 \end{pmatrix}$ with respect to the decomposition $X = \overline{R(A)} \oplus N(A)$ such that $A_{00}$ is a sectorial operator on $\overline{R(A)}$. In this case $Px = \lim_{n \to \infty} -\frac{1}{n} R(-\frac{1}{n},A)x$ is the projection onto the null space $N(A)$.

If $-A$ is the generator of a bounded $C_0$-semigroup $(T(t))_{t \ge 0}$ and one has $\theta > \frac{\pi}{2}$, the functional calculus can be written in terms of the semigroup. Indeed, if $f \in H_0^{\infty}(\Sigma_{\theta})$ there exists a unique $b \in L^1(\IR_+)$ such that
	\[ f = \hat{b} \qquad \text{and} \qquad f(A) = \int_0^{\infty} b(t) T(t) \, dt \]
in the strong operator topology, where $\hat{b}$ is the Laplace transform of $b$ defined on $\overline{\Sigma_{\pi/2}}$ by
	\[ \hat{b}(z) = \int_0^{\infty} b(t) e^{-zt} \, dt. \]
Then~\cite[Proposition~2.13]{Mer99} the pseudo-sectorial operator $A$ admits a bounded $H^{\infty}(\Sigma_{\theta})$-calculus for any $\theta > \frac{\pi}{2}$ if and only if there exists a constant $C \ge 0$ such that for every $b \in L^1(\IR_+)$ whose Laplace transform $\hat{b}$ belongs to $H_0^{\infty}(\Sigma_{\theta})$ one has
	\[ \biggnorm{\int_0^{\infty} b(t) T(t) \, dt} \le C \lVert \hat{b} \rVert_{H^{\infty}(\Sigma_{\theta})}. \]

\section{\texorpdfstring{$p$}{p}-Completely Bounded Maps and \texorpdfstring{$p$}{p}-Matrix Normed Spaces}

In this section, we present the needed operator space theoretic background on completely bounded maps and matrix normed spaces. As we do not want to delve deeply in this branch of functional analysis, we give an ad-hoc definition only introducing the terminology necessary for this article following~\cite{LeM96}. Let $E, F$ be two Banach spaces which are embedded into the algebras of bounded operators $\mathcal{B}(X)$ and $\mathcal{B}(Y)$ of two Banach spaces $X$ and $Y$. A linear map $u\colon E \to F$ induces (for $n \in \IN$) a linear map
	\begin{align*} 
		u_n\colon M_n(\mathcal{B}(X)) \supset M_n(E) & \to M_n(F) \subset M_n(\mathcal{B}(Y)) \\
		[ a_{ij} ]  & \mapsto [u(a_{ij})]	
	\end{align*}
between the matrix algebras. For a fixed $p \in [1, \infty]$ the algebra $M_n(\mathcal{B}(X))$ can be identified with $\mathcal{B}(\ell_p^n(X))$. For $p < \infty$ the norm of a matrix element $[a_{ij}] \in M_n(E)$ is then given as
	\[ \norm{[a_{ij}]}_{M_n(E)}^p = \sup \left\{ \sum_{i=1}^n \biggnorm{\sum_{j=1}^n a_{ij}(x_j)}^p : \sum_{j=1}^n \norm{x_j}_{X}^p \le 1 \right \}. \]
	
\begin{definition}[Complete Boundedness] A map $u\colon E \to F$ as above is \emph{$p$-completely bounded} ($p \in [1,\infty]$) if the induced maps $u_n\colon M_n(E) \to M_n(F)$ seen as linear maps between subspaces of $\mathcal{B}(\ell_p^n(X))$ and $\mathcal{B}(\ell_p^n(Y))$ are uniformly bounded in $n \in \IN$.
\end{definition}

Note that the $p$-complete boundedness depends on the choice of the embeddings $E \hookrightarrow \mathcal{B}(X)$ and $F \hookrightarrow \mathcal{B}(Y)$. We will call the datum of a Banach space $E$ with an embedding into $\mathcal{B}(X)$ a \emph{$p$-matrix normed space structure} for $E$ and we will always consider $\mathcal{B}(Z)$ for a Banach space $Z$ with its natural $p$-matrix normed space structure.

\section{The \texorpdfstring{$p$}{p}-Matrix Normed Space Structure for \texorpdfstring{$H^{\infty}(\Sigma_{\theta})$}{ The Algebra of Bounded Holomorphic Functions}}

Let $Y$ be a Banach space and let $V(t)g(s) = g(s - t)$ be the shift group on $L_p(\IR; Y)$ for $p \in (1, \infty)$ and $B$ be its negative infinitesimal generator. Then for $\theta > \frac{\pi}{2}$ one has $f(B)g = b * g$ for $f \in H_0^{\infty}(\Sigma_{\theta})$, where $b$ is the unique element in $L^1(\IR_+)$ such that $f = \hat{b}$, the Laplace transform of $b$. The Banach space valued variant of the Mikhlin multiplier theorem~\cite[Proposition~3]{Zim89} then shows that $B$ has a bounded $H^{\infty}$-calculus with $\omega_{H^{\infty}}(B) = \frac{\pi}{2}$ if $Y$ is a UMD-space. 

UMD-spaces provide a natural setting for vector-valued harmonic analysis. We refer the reader to~\cite{Fra86} and~\cite{Bur01} for details. We only note for further usage that for $p \in (1, \infty)$ the Bochner--Lebesgue spaces $L_p(X)$ are UMD-spaces if $X$ is a UMD-space and that the UMD-property is stable under subspaces and quotients. In particular, the $L_p$-spaces are UMD for $p \in (1, \infty)$. Furthermore, UMD-spaces are always reflexive.

Using the boundedness of the $H^{\infty}(\Sigma_{\theta})$-calculus of $B$ on $L_p(\IR;Y)$ for each $\theta > \frac{\pi}{2}$ one can define an embedding of $H^{\infty}(\Sigma_{\frac{\pi}{2}+}) \coloneqq \bigcup_{\theta > \frac{\pi}{2}} H^{\infty}(\Sigma_{\theta})$ (as vector spaces)
	\begin{equation*}
		\label{OS1}
		\tag{$p$-MNS1}
		\begin{split}
			H^{\infty}(\Sigma_{\frac{\pi}{2}+})  & \hookrightarrow \mathcal{B}(L_p(\IR;Y)) \\
			f & \mapsto f(B). 
		\end{split}
	\end{equation*}
The above map is indeed injective: First let $f_1(B) = f_2(B)$ with $f_i \in H_0^{\infty}(\Sigma_{\theta})$ for some $\theta > \frac{\pi}{2}$. This implies for the inverse Laplace transforms $b_i$ of $f_i$ that $b_1 * g = b_2 * g$ for all $g \in L_p(\IR; Y)$ and therefore $b_1 = b_2$, which yields $f_1 = f_2$. For the general case we use the fact that $H_0^{\infty}(\Sigma_{\frac{\pi}{2}+})$ is an ideal in $H^{\infty}(\Sigma_{\frac{\pi}{2}+})$: $f_1(B) = f_2(B)$ implies $(f_1 \rho)(B) = (f_2 \rho)(B)$, where $\rho(\lambda) = \frac{\lambda}{(1+ \lambda)^2}$, which in turn shows $f_1 \rho = f_2 \rho$ and therefore $f_1 = f_2$.

We now endow $H^{\infty}(\Sigma_{\frac{\pi}{2}+})$ with the norm induced as a subspace of $\mathcal{B}(L_p(\IR;Y))$. Note that this also gives $H^{\infty}(\Sigma_{\frac{\pi}{2}+})$ the structure of a $p$-matrix normed space. We will call this $p$-matrix normed space structure the \emph{$p$-matrix normed space structure~\eqref{OS1} with respect to $Y$}.

The above choice of a matrix normed space structure is natural in view of transference techniques, but has the disadvantage that it does not make use of the angle $\omega_{H^{\infty}}$ and therefore loses information on the strength of the functional calculus. We will now solve this issue by using the fractional powers of the generator $B$ to define refined versions of the above embedding. For this we use the following proposition.

\begin{proposition}\label{prop:fractional_powers}
	Let $A$ be a sectorial operator on a Banach space. Then for $\alpha \in (0, 2\pi / \omega(A))$ the fractional powers $A^{\alpha}$ are sectorial with $\omega(A^{\alpha}) = \alpha \omega(A)$. Moreover, if $A$ has a bounded $H^{\infty}(\Sigma_{\theta})$-calculus and $\theta \in (0, 2\pi / \omega_{H^{\infty}}(A))$, then $A^{\alpha}$ has a bounded $H^{\infty}$-calculus with $w_{H^{\infty}}(A^{\alpha}) = \alpha \omega_{H^{\infty}}(A)$.
\end{proposition}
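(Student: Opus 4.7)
The plan is to treat both parts in tandem via the composition rule for the functional calculus, applied to the specific function $p_\alpha(\lambda) = \lambda^\alpha$.

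First, for the sectoriality statement I would fix a branch of $\lambda^\alpha$ on $\mathbb{C}\setminus(-\infty,0]$. For $\sigma < \pi/\alpha$ the map $p_\alpha$ is a biholomorphism from $\Sigma_\sigma$ onto $\Sigma_{\alpha\sigma}$ (and maps $\overline{\Sigma_\sigma}$ onto $\overline{\Sigma_{\alpha\sigma}}$). The fractional power $A^\alpha$ is classically defined either via a Balakrishnan-type integral or, when $\alpha < 1$, directly by $p_\alpha(A)$ using the regularized calculus $(p_\alpha \rho)(A)$ with $\rho(\lambda)=\lambda/(1+\lambda)^2$; the two constructions agree under the hypothesis $\alpha\omega(A) < 2\pi$. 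The spectral mapping theorem for fractional powers then gives $\sigma(A^\alpha) = p_\alpha(\sigma(A)) \subset \overline{\Sigma_{\alpha\omega(A)}}$, and the resolvent estimate
\[ \lambda(\lambda - A^\alpha)^{-1} = p_\alpha(\mu)((p_\alpha(\mu) - A^\alpha)^{-1} \]
together with the elementary identity
\[ (p_\alpha(\mu) - A^\alpha)^{-1} = \prod_{k=0}^{\lceil\alpha\rceil} (\zeta_k \mu - A)^{-1} \cdot (\text{polynomial factor})\]
(or, more cleanly, an application of the estimates directly from the Dunford integral definition) gives the required uniform bound of $\mu(\mu - A^\alpha)^{-1}$ outside $\overline{\Sigma_{\alpha\omega(A)+\delta}}$ for every $\delta>0$. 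Hence $\omega(A^\alpha) \le \alpha\omega(A)$; the reverse inequality follows from step three below combined with the law of exponents $(A^\alpha)^{1/\alpha} = A$.

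For the $H^\infty$-part I would use the composition principle: if $\theta > \omega_{H^\infty}(A)$ and $f \in H^\infty(\Sigma_{\alpha\theta})$, then $f \circ p_\alpha \in H^\infty(\Sigma_\theta)$ with $\|f \circ p_\alpha\|_{H^\infty(\Sigma_\theta)} = \|f\|_{H^\infty(\Sigma_{\alpha\theta})}$. The key identity I need is
\[ f(A^\alpha) = (f \circ p_\alpha)(A), \]
which for $f \in H_0^\infty(\Sigma_{\alpha\theta})$ follows by a contour change of variables in the Dunford integral, and for general $f \in H^\infty(\Sigma_{\alpha\theta})$ by approximation with the regularizers $\rho_n$ from Section~\ref{sec:functional_calculus} (noting that $(f\rho_n)\circ p_\alpha = (f\circ p_\alpha)(\rho_n \circ p_\alpha)$ and that both sides converge strongly). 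Combined with the assumed bound $\|(f\circ p_\alpha)(A)\| \le C_\theta \|f\circ p_\alpha\|_\infty = C_\theta \|f\|_\infty$, this yields a bounded $H^\infty(\Sigma_{\alpha\theta})$-calculus for $A^\alpha$, so that $\omega_{H^\infty}(A^\alpha) \le \alpha\,\omega_{H^\infty}(A)$.

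Finally, the reverse inequality follows by applying the same argument to the operator $B := A^\alpha$ with exponent $1/\alpha$. The hypothesis $\alpha < 2\pi/\omega_{H^\infty}(A)$ ensures $\omega(B) = \alpha\omega(A) < 2\pi$, so $1/\alpha$ is admissible for $B$, and the law of exponents gives $B^{1/\alpha} = A$. Thus $\omega_{H^\infty}(A) \le \tfrac{1}{\alpha}\omega_{H^\infty}(A^\alpha)$, yielding equality. The main obstacle I anticipate is the rigorous justification of the composition identity $f(A^\alpha) = (f\circ p_\alpha)(A)$ across the full range of $\alpha$ (in particular for $\alpha > 1$, where $p_\alpha(A)$ cannot be directly defined from the $H_0^\infty$-calculus and requires the regularized extension), and verifying that the different constructions of $A^\alpha$ in the literature are compatible so that the law of exponents is available in this generality.
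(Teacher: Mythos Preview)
The paper does not give an independent proof of this proposition; it simply cites \cite[Theorem~15.16]{KunWei04} for the sectoriality part and \cite[Proposition~15.11]{KunWei04} for the $H^{\infty}$-part. Your outline is precisely the standard argument contained in those references: the composition rule $f(A^{\alpha}) = (f \circ p_{\alpha})(A)$ together with the fact that $p_{\alpha}\colon \Sigma_{\theta} \to \Sigma_{\alpha\theta}$ is a bijection is exactly how \cite{KunWei04} proves Proposition~15.11, and the law of exponents $(A^{\alpha})^{1/\alpha} = A$ is indeed the device used there to obtain the reverse inequality.

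Two minor comments. First, your product-of-resolvents formula is garbled (the index range and the ``polynomial factor'' are not quite right as written), but you are correct that one can bypass this entirely by estimating $\lambda(\lambda - A^{\alpha})^{-1}$ directly from the Dunford integral representation of $(\lambda - A^{\alpha})^{-1}$; this is in fact how \cite[Theorem~15.16]{KunWei04} proceeds. Second, the obstacle you flag at the end---making the composition identity and the law of exponents rigorous for the full range of $\alpha$, including $\alpha > 1$ where the regularized (extended) calculus is needed---is real but is handled in detail in \cite[\S15]{KunWei04}, so in the context of this paper it is acceptable to simply invoke those results rather than reprove them.
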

\begin{proof}
	A proof of the first part can be found in~\cite[Theorem~15.16]{KunWei04}, whereas the second part follows from~\cite[Proposition~15.11]{KunWei04}.
\end{proof}

In the setting as above choose $\alpha$ with $\alpha \in [1, \infty)$. Then $B^{1/\alpha}$ has a bounded $H^{\infty}(\Sigma_{\frac{\pi}{2\alpha}+})$-calculus. Now consider the embedding
	\begin{equation*}
		\label{OS2}
		\tag{$p$-MNS2}
		\begin{split}
			H^{\infty}(\Sigma_{\frac{\pi}{2\alpha}+}) & \hookrightarrow \mathcal{B}(L_p(\IR;Y)) \\
			f & \mapsto f(B^{\frac{1}{\alpha}}).
		\end{split}
	\end{equation*}
Notice that one has $f(B^{\frac{1}{\alpha}}) = (f \circ \cdot^{1/\alpha})(B)$ and that for $\theta \in (0, \pi)$ one has an isomorphism
	\begin{align*}
		H^{\infty}(\Sigma_{\theta}) & \to H^{\infty}(\Sigma_{\theta/\alpha}) \\
		f & \mapsto [ \lambda \mapsto f(\lambda^{\alpha}) ].
	\end{align*}
Therefore $f \circ \cdot^{1/\alpha} \in H^{\infty}(\Sigma_{\frac{\pi}{2}+})$ and injectivity follows from the case $\alpha = 1$ considered above. This again endows $H^{\infty}(\Sigma_{\frac{\pi}{2\alpha}+})$ with the structure of a $p$-matrix normed space. We will call this $p$-matrix normed space structure the \emph{$p$-matrix normed space structure~\eqref{OS2} with respect to $Y$} (this structure depends on $\alpha$ although not explicitly mentioned).

We want to point out that the norms on $H^{\infty}$ induced by the $p$-matrix normed space structures~\eqref{OS1} and~\eqref{OS2}, respectively, do not agree with the usual norms on these algebras as introduced in Section~\ref{sec:introduction}. We will always consider the spaces $H^{\infty}(\Sigma)$ with these non-standard norms.

\section{\texorpdfstring{$p$}{p}-Completely Bounded \texorpdfstring{$H^{\infty}$-}{Holomorphic Functional }Calculus}

In this section we show that the $H^{\infty}$-calculus of a (pseudo)-sectorial operator is $p$-completely bounded for the $p$-matrix normed space structures just defined. The proof will use vector-valued transference techniques and therefore we start with the case of a bounded group. Notice that a bounded group on a UMD-space always has a bounded $H^{\infty}(\Sigma_{\frac{\pi}{2}+})$-calculus and that in the next two propositions we consider $H_0^{\infty}(\Sigma_{\frac{\pi}{2}+})$ with the norm induced by the $p$-matrix normed space structure~\eqref{OS1}.

\begin{proposition}\label{prop:group_case}
	Let $(U(t))_{t \in \IR}$ be a bounded group on a subspace-quotient $SQ_X$ of a UMD-space $X$ with infinitesimal generator $-C$. Then the $H^{\infty}$-calculus homomorphism
		\[ u\colon H_0^{\infty}(\Sigma_{\frac{\pi}{2}+}) \to \mathcal{B}(SQ_X) \]
	is $p$-completely bounded for $p \in (1,\infty)$ and the $p$-matrix normed space structure~\eqref{OS1} with respect to $X$. 
\end{proposition}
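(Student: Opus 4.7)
My plan is to prove a matrix-valued analogue of the Coifman--Weiss transference principle, exploiting crucially that $(U(t))_{t\in\IR}$ is a \emph{group} (so no dilation is needed). This reduces the matrix norm of $[f_{ij}(C)]$ on $\ell_p^n(SQ_X)$ to the matrix norm of convolution $[b_{ij}\ast]$ on $\ell_p^n(L_p(\IR; SQ_X))$, which in turn is controlled by its analogue on $\ell_p^n(L_p(\IR; X))$, i.e.\ by the \eqref{OS1}-norm.

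\textbf{Key steps.} For $[f_{ij}] \in M_n(H_0^\infty(\Sigma_{\pi/2+}))$, write $f_{ij} = \widehat{b_{ij}}$ with $b_{ij} \in L^1(\IR_+)$; by the semigroup representation of the functional calculus recalled at the end of Section~\ref{sec:functional_calculus}, one has $f_{ij}(C) = \int_0^\infty b_{ij}(t) U(t)\,dt$ in strong operator topology on $SQ_X$. After truncating to $b_{ij}\cdot\mathbf{1}_{[0,R]}$ (and passing to the limit via the trivial bound $\|\int b\,U(t)\,dt\| \le K\|b\|_1$ matrix-wise, with $K := \sup_t\|U(t)\|$), the standard transference identity reads, for $y = (y_j) \in \ell_p^n(SQ_X)$, $r > R$, and $s \in [-(r-R), r-R]$,
\[ U(s) \sum_j \int b_{ij}(t) U(t) y_j\,dt = \sum_j (\check b_{ij} \ast \phi_j)(s), \qquad \phi_j(\tau) := \mathbf{1}_{[-r,r]}(\tau) U(\tau) y_j, \]
with $\check b(t) := b(-t)$. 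Taking the $\ell_p^n$-norm in $i$, integrating $p$-th powers in $s$, and using $\|y_i\| \leq K \|U(-s) U(s) y_i\|$ on the left and $\|\phi_j\|_{L_p}^p \leq 2rK^p\|y_j\|^p$ on the right, then sending $r \to \infty$ and using the reflection isometry $g \mapsto g(-\cdot)$ to pass from $\check b_{ij}$ to $b_{ij}$, yields
\[ \|[f_{ij}(C)]\|_{\mathcal{B}(\ell_p^n(SQ_X))} \leq K^2 \|[b_{ij}\ast]\|_{\mathcal{B}(\ell_p^n(L_p(\IR; SQ_X)))}. \]

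\textbf{From $SQ_X$ to $X$, and conclusion.} Since $SQ_X = M/N$ for closed $N \subset M \subset X$ and each convolution $b_{ij}\ast$ acts only in the $\IR$-variable, the matrix $[b_{ij}\ast]$ leaves both $\ell_p^n(L_p(\IR; M))$ and $\ell_p^n(L_p(\IR; N))$ invariant and descends to $\ell_p^n(L_p(\IR; SQ_X))$ with matrix norm dominated by that on $\ell_p^n(L_p(\IR; X))$. Combined with the defining identity $f_{ij}(B)g = b_{ij}\ast g$ from Section~4 and the definition of~\eqref{OS1}, this yields $\|u_n([f_{ij}])\| \leq K^2 \|[f_{ij}]\|_{M_n(H_0^\infty)}$ uniformly in $n$, which is the asserted $p$-complete boundedness.

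\textbf{Main obstacle.} The principal technical hurdle is checking that the scalar transference identity passes cleanly through the matrix layer: because convolution is linear in the kernel and the $\ell_p^n$-norm decouples from the transference variable $s$, the classical argument carries over coordinate-wise, but one has to be meticulous about which norm lives on which space. In particular, one must verify that the descent of $[b_{ij}\ast]$ from $L_p(\IR; X)$ to the subspace-quotient $L_p(\IR; SQ_X)$ preserves the full matrix norm (which it does because convolution only acts in the scalar variable and commutes with any operator acting pointwise in the vector variable), and that the $L^1$-approximation of $b_{ij}$ is compatible with the matrix structure --- both being routine, but crucial for obtaining a constant depending only on $K$, not on $n$.
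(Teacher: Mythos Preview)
Your proof is correct and follows essentially the same route as the paper's: both reduce $\|[f_{ij}(C)]\|$ to $K^2\|[f_{ij}(B)]\|$ via Coifman--Weiss transference and then pass from $SQ_X$ to $X$ by observing that convolution acts only in the real variable and therefore descends through invariant subspaces and quotients with non-increasing norm. The only difference is packaging: the paper invokes the vector-valued transference principle of Le~Merdy \cite[Theorem~4.1]{LeM99} as a black box---using that the matrix kernel $[b_{ij}(t)]\otimes\Id$ lies in the commutant of the diagonal group $\mathcal{U}(t)=\diag(U(t),\ldots,U(t))$---whereas you reproduce the classical transference identity by hand, coordinate-wise, with the truncation/averaging/limit argument; both yield the same constant $K^2$ and the same reduction to the shift group.
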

	
\begin{proof}
	Let us first assume that the subspace-quotient is $X$ itself. Further let $M \coloneqq \sup_{t \in \IR} \norm{U(t)}$. One has for $[f_{ij}] \in M_n(H_0^{\infty}(\Sigma_{\frac{\pi}{2}+}))$ and the inverse Laplace transforms $[b_{ij}] \in M_n(L^1(\IR_+))$ determined by $f_{ij} = \hat{b}_{ij}$
		\begin{align*}
			\MoveEqLeft \norm{[f_{ij}(C)]}_{M_n(\mathcal{B}(X))} = \norm{[f_{ij}(C)]}_{\mathcal{B}(\ell_p^n(X))} = \biggnorm{\bigg[\int_0^{\infty} b_{ij}(t) U(t) \, dt\bigg]}_{\mathcal{B}(\ell_p^n(X))} \\
			& = \biggnorm{\int_0^{\infty} ([b_{ij}(t)] \otimes \Id) \, \mathcal{U}(t) \, dt}_{\mathcal{B}(\ell_p^n(X))},
		\end{align*}
	where $\mathcal{U}(t)$ is the diagonal operator $\diag(U(t), \ldots, U(t))$ in $\mathcal{B}(\ell_p^n(X))$. Since the matrix $[b_{ij}(t)] = [b_{ij}(t) \Id]$ is in the commutant of the group $(\mathcal{U}(t))_{t \in \IR}$, one has by the vector-valued transference principle~\cite[Theorem~4.1]{LeM99}
	 	\begin{align*}
			\MoveEqLeft \norm{[f_{ij}(C)]}_{M_n(\mathcal{B}(X))} \le M^2 \norm{\int_0^{\infty} V(t) \otimes [b_{ij}(t)] \otimes \Id \, dt}_{\mathcal{B}(L_p(\IR; \ell_p^n(X)))}. %
		\end{align*}
	Let $\mathcal{V}(t)$ be the diagonal operator $\diag(V(t), \ldots, V(t))$ on $\ell_p^n(L_p(\IR;X))$. Then after interchanging the order of the $L_p$-spaces we obtain
		\begin{align*}
			\MoveEqLeft \norm{[f_{ij}(C)]}_{M_n(\mathcal{B}(X))} \le M^2 \biggnorm{\int_0^{\infty} ([b_{ij}(t)] \otimes \Id) \, \mathcal{V}(t) \, dt}_{\mathcal{B}(\ell_p^n(L_p(\IR;X)))} \\
			& = M^2 \biggnorm{\bigg[ \int_0^{\infty} b_{ij}(t) V(t) \, dt \bigg]}_{\mathcal{B}(\ell_p^n(L_p(\IR;X)))} = M^2 \norm{[f_{ij}(B)]}_{M_n(L_p(\IR;X))},
		\end{align*}
		which is the $p$-complete boundedness of $u$. Now, if $SQ_X$ is a general subspace-quotient of $X$, one can repeat all the above arguments replacing $X$ by $SQ_X$. Further, note that the shift $(V(t))_{t \in \IR}$ in $L_p(\IR;X)$ naturally restricts to the shift $(V_{SQ_X}(t))_{t \in \IR}$ on $L_p(\IR;SQ_X)$ with generator $B_{SQ_X}$. From this and the definition of the functional calculus it then follows immediately that for all $[f_{ij}] \in M_n(H_0^{\infty}(\Sigma_{\frac{\pi}{2}+}))$
			\begin{align*} 
				\norm{[f_{ij}(C)]}_{M_n(\mathcal{B}(SQ_X))} & \le M^2 \norm{[f_{ij}(B_{SQ_X})]}_{M_n(L_p(\IR;SQ_X))} \\
				& \le M^2 \norm{[f_{ij}(B)]}_{M_n(L_p(\IR;X))}. \qedhere
			\end{align*}
\end{proof}

Let now $(T(t))_{t \ge 0}$ be a bounded analytic $C_0$-semigroup with sectorial generator $-A$ on a subspace-quotient $SQ_X$ of a UMD-space $X$. If $A$ has a bounded $H^{\infty}$-calculus with $\omega_{H^{\infty}}(A) < \frac{\pi}{2}$, then the Fröhlich--Weis theorem~\cite[Corollary~5.4]{FroWei06} yields that $(T(t))_{t \ge 0}$ dilates to a bounded $C_0$-group $(U(t))_{t \in \IR}$ with negative generator $-C$ on the UMD-space $SQ_Y \coloneqq L_p([0,1];SQ_X)$ which is itself a subspace-quotient of $Y \coloneqq L_p([0,1];X)$ (one can choose $1 < p < \infty$), that is
	\[ JT(t) = PU(t)J, \]
where $J\colon SQ_X \to SQ_Y$ is an isometric embedding and $P\colon SQ_Y \to SQ_Y$ is a bounded projection onto $\Im(J)$. Then one has for $M \coloneqq \sup_{t \in \IR} \norm{U(t)}$ by Proposition~\ref{prop:group_case}
	\begin{align*}
		\norm{[f_{ij}(A)]}_{M_n(\mathcal{B}(SQ_X))} & = \norm{[J f_{ij}(A)]}_{M_n(\mathcal{B}(SQ_X,SQ_Y))} \\  
		& = \norm{[Pf_{ij}(C)J]}_{M_n(\mathcal{B}(SQ_X,SQ_Y))} \le \norm{P} \norm{[f_{ij}(C)]}_{M_n(\mathcal{B}(SQ_Y))} \\
		& \le \norm{P} M^2 \norm{f_{ij}(B)}_{M_n(L_p(\IR; Y))},
	\end{align*}
where $-B$ is the generator of the shift group on $L_p(\IR;Y)$. We have shown the following proposition.

\begin{proposition}\label{prop:p-completely-bound-os1}
	Let $A$ be a sectorial operator on a subspace-quotient $SQ_X$ of a UMD-space $X$ with a bounded $H^{\infty}(\Sigma_{\theta})$-calculus for some $\theta < \frac{\pi}{2}$. Then the $H^{\infty}$-calculus homomorphism
		\[ u\colon H_0^{\infty}(\Sigma_{\frac{\pi}{2}+}) \to \mathcal{B}(SQ_X) \]
	is $p$-completely bounded for $p \in (1,\infty)$ and the $p$-matrix normed space structure~\eqref{OS1} with respect to $L_p([0,1];X)$.
\end{proposition}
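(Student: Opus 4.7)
The plan is to reduce the analytic semigroup case to the bounded group case already handled in Proposition~\ref{prop:group_case}. The key tool enabling this reduction is the Fröhlich--Weis dilation theorem, which says that whenever $(T(t))_{t \ge 0}$ is a bounded analytic $C_0$-semigroup with generator $-A$ on a subspace-quotient $SQ_X$ of a UMD-space $X$, and $A$ has a bounded $H^{\infty}(\Sigma_{\theta})$-calculus for some $\theta < \frac{\pi}{2}$, then $(T(t))_{t \ge 0}$ dilates to a bounded $C_0$-group on $SQ_Y := L_p([0,1]; SQ_X)$, which is itself a subspace-quotient of the UMD-space $Y := L_p([0,1]; X)$. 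Concretely, one obtains an isometric embedding $J\colon SQ_X \hookrightarrow SQ_Y$ and a bounded projection $P\colon SQ_Y \to SQ_Y$ onto $\Image(J)$, together with a bounded group $(U(t))_{t \in \IR}$ on $SQ_Y$ with generator $-C$, satisfying $JT(t) = PU(t)J$ for all $t \ge 0$.

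The next step is to upgrade this intertwining from the semigroup level to the level of the functional calculus: I would verify that $Jf(A) = Pf(C)J$ holds for all $f \in H_0^{\infty}(\Sigma_{\frac{\pi}{2}+})$. On $H_0^{\infty}$ this follows directly by representing $f(A)$ and $f(C)$ through the Laplace transform identity $f(A) = \int_0^{\infty} b(t) T(t)\,dt$ (and similarly for $C$ with $U$) recalled at the end of Section~\ref{sec:functional_calculus}, then pulling $J$ inside the Bochner integral and using $JT(t) = PU(t)J$; note here that even though $C$ generates a group, the relevant Laplace transform representation still applies on the semigroup half.

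Given this, $p$-complete boundedness is then a short computation at the matrix level. For $[f_{ij}] \in M_n(H_0^{\infty}(\Sigma_{\frac{\pi}{2}+}))$ I would write
\[
\norm{[f_{ij}(A)]}_{M_n(\mathcal{B}(SQ_X))} = \norm{[Jf_{ij}(A)]}_{M_n(\mathcal{B}(SQ_X, SQ_Y))} = \norm{[Pf_{ij}(C)J]}_{M_n(\mathcal{B}(SQ_X, SQ_Y))},
\]
using that $J$ is an isometric embedding in the first equality (so it acts as an isometry at every matrix level after identifying $\ell_p^n(SQ_X) \hookrightarrow \ell_p^n(SQ_Y)$). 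Factoring out the uniform multiplier norms of $J$ and $P$ then reduces the estimate to $\norm{[f_{ij}(C)]}_{M_n(\mathcal{B}(SQ_Y))}$.

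Finally, I would apply Proposition~\ref{prop:group_case} to the bounded group $(U(t))_{t \in \IR}$ on the subspace-quotient $SQ_Y$ of the UMD-space $Y = L_p([0,1]; X)$: this yields $\norm{[f_{ij}(C)]}_{M_n(\mathcal{B}(SQ_Y))} \le M^2 \norm{[f_{ij}(B)]}_{M_n(\mathcal{B}(L_p(\IR; Y)))}$, where $M = \sup_{t \in \IR}\norm{U(t)}$ and $-B$ generates the shift group on $L_p(\IR; Y)$. Combining this with the previous display gives the desired $p$-complete boundedness with respect to the $p$-matrix normed space structure~\eqref{OS1} with respect to $L_p([0,1]; X)$. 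The only delicate point I anticipate is checking the dilation relation $Jf(A) = Pf(C)J$ at the level of the full $H_0^{\infty}$-calculus (rather than just on the semigroup), but this is a routine consequence of the Laplace transform representation of the calculus together with Fubini; the rest is bookkeeping with operator matrices.
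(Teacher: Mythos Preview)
Your proposal is correct and follows essentially the same route as the paper: dilate via the Fr\"ohlich--Weis theorem to a bounded group on $SQ_Y = L_p([0,1];SQ_X)$, pass the intertwining relation $JT(t)=PU(t)J$ through the Laplace-transform representation of the $H_0^{\infty}$-calculus to get $Jf(A)=Pf(C)J$, and then invoke Proposition~\ref{prop:group_case} on $SQ_Y$ as a subspace-quotient of $Y=L_p([0,1];X)$. The paper's argument is the same computation, carried out slightly more tersely.
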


Using the $p$-matrix normed space structure~\eqref{OS2} instead, we obtain our main result of this section. In what follows, we consider $H_0^{\infty}(\Sigma_{\frac{\pi}{2\alpha}+})$ with the normed induced by~\eqref{OS2}.

\begin{theorem}\label{thm:p-completely-bound-os2-pseudo}
	Let $A$ be a sectorial operator on a subspace-quotient $SQ_X$ of a UMD-space $X$ with a bounded $H^{\infty}(\Sigma_{\theta})$-calculus for some $\theta < \frac{\pi}{2}$. Then for $\alpha \in [1, \frac{\pi}{2\theta})$ the $H^{\infty}$-calculus homomorphism
		\[ u\colon H_0^{\infty}(\Sigma_{\frac{\pi}{2\alpha}+}) \to \mathcal{B}(SQ_X) \]
	is $p$-completely bounded for $p \in (1,\infty)$ and the $p$-matrix normed space structure~\eqref{OS2} with respect to $L_p([0,1];X)$.
\end{theorem}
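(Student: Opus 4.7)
The plan is to reduce the OS2 statement to the OS1 statement already proved in Proposition~\ref{prop:p-completely-bound-os1}, via the substitution $\lambda \mapsto \lambda^{\alpha}$.

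First I would observe that since $A$ has a bounded $H^{\infty}(\Sigma_{\theta})$-calculus with $\theta < \pi/2$ and $\alpha \in [1, \pi/(2\theta))$, Proposition~\ref{prop:fractional_powers} implies that $A^{\alpha}$ is sectorial on $SQ_X$ with bounded $H^{\infty}$-calculus of angle $\omega_{H^{\infty}}(A^{\alpha}) = \alpha\omega_{H^{\infty}}(A) \le \alpha\theta < \pi/2$. Likewise, since $B$ is sectorial on $L_p(\IR;Y)$ (with $Y = L_p([0,1];X)$) of angle $\pi/2$ with bounded $H^{\infty}(\Sigma_{\pi/2+})$-calculus, the fractional power $B^{1/\alpha}$ has a bounded $H^{\infty}(\Sigma_{\pi/(2\alpha)+})$-calculus. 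In particular, the composition map
\[
\Phi\colon H_0^{\infty}(\Sigma_{\pi/(2\alpha)+}) \to H_0^{\infty}(\Sigma_{\pi/2+}), \quad f \mapsto g, \; g(\lambda) \coloneqq f(\lambda^{1/\alpha})
\]
is a bijection (this is immediate from the definitions of $H_0^{\infty}$), and by the composition rule for fractional powers in the $H^{\infty}$-calculus one has the identities
\[
f(A) = g(A^{\alpha}) \qquad \text{and} \qquad f(B^{1/\alpha}) = g(B).
\]

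Next I would apply Proposition~\ref{prop:p-completely-bound-os1} to the operator $A^{\alpha}$ on $SQ_X$. This yields a constant $C \ge 0$ (depending on the $H^{\infty}$-bound of $A^{\alpha}$ and the UMD constants of $X$) such that for every matrix $[g_{ij}] \in M_n(H_0^{\infty}(\Sigma_{\pi/2+}))$
\[
\norm{[g_{ij}(A^{\alpha})]}_{M_n(\mathcal{B}(SQ_X))} \le C \, \norm{[g_{ij}(B)]}_{M_n(\mathcal{B}(L_p(\IR; Y)))}.
\]
Applied to $g_{ij} = \Phi(f_{ij})$ for any $[f_{ij}] \in M_n(H_0^{\infty}(\Sigma_{\pi/(2\alpha)+}))$, the two identities above transform this estimate into
\[
\norm{[f_{ij}(A)]}_{M_n(\mathcal{B}(SQ_X))} \le C \, \norm{[f_{ij}(B^{1/\alpha})]}_{M_n(\mathcal{B}(L_p(\IR; Y)))},
\]
which is exactly the $p$-complete boundedness statement for $u$ with respect to the $p$-matrix normed space structure~\eqref{OS2} defined via $B^{1/\alpha}$.

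The main obstacle I anticipate is ensuring that the functional calculus composition rule $f(A) = (f \circ \cdot^{1/\alpha})(A^{\alpha})$ is applicable in our setting, and that it transports cleanly to matrices of functions. For scalar functions this is a standard consequence of the construction of fractional powers (as used in~\cite[Chapter~15]{KunWei04}), and since the matrix operations $[f_{ij}] \mapsto [f_{ij}(T)]$ are defined entrywise, the identity passes to $M_n(\cdot)$ without extra work. One must also verify that $\Phi$ preserves the $H_0^{\infty}$-decay condition, which follows by a direct estimate: if $\abs{f(\mu)} \le C \abs{\mu}^{\epsilon}(1+\abs{\mu})^{-2\epsilon}$ on $\Sigma_{\pi/(2\alpha)}$, then substituting $\mu = \lambda^{1/\alpha}$ gives the corresponding $H_0^{\infty}$-decay of $g$ on $\Sigma_{\pi/2}$ with exponent $\epsilon/\alpha$. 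Once these points are checked, the proof is a one-line combination of the fractional-power identity with Proposition~\ref{prop:p-completely-bound-os1}.
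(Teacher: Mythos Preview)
Your proof is correct and follows essentially the same approach as the paper: apply Proposition~\ref{prop:p-completely-bound-os1} to the fractional power $A^{\alpha}$ (which has $H^{\infty}$-angle $<\pi/2$ by Proposition~\ref{prop:fractional_powers}) and then use the substitution $\lambda \mapsto \lambda^{1/\alpha}$ together with the composition rule $f(A) = (f\circ\cdot^{1/\alpha})(A^{\alpha})$ and $f(B^{1/\alpha}) = (f\circ\cdot^{1/\alpha})(B)$ to transfer the $M_n$-estimate from \eqref{OS1} to \eqref{OS2}. Your additional remarks on verifying that $\Phi$ preserves the $H_0^{\infty}$-decay and that the composition rule acts entrywise on matrices are correct and fill in details the paper leaves implicit.
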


\begin{proof}
	Let $\alpha \in [1, \frac{\pi}{2\theta})$. Then $A^{\alpha}$ has a bounded $H^{\infty}(\Sigma_{\alpha \theta})$-calculus with $\alpha \theta < \frac{\pi}{2}$ by Proposition~\ref{prop:fractional_powers}. We apply Proposition~\ref{prop:p-completely-bound-os1} to $A^{\alpha}$ and obtain that for $C = C(\alpha) > 0$
		\[ \norm{[\tilde{f}_{ij}(A^{\alpha})]} \le C \norm{[\tilde{f}_{ij}(B)]} \]
	for all $[\tilde{f}_{ij}] \in M_n(H_0^{\infty}(\Sigma_{\frac{\pi}{2}+}))$. Now let $[f_{ij}] \in M_n(H_0^{\infty}(\Sigma_{\frac{\pi}{2\alpha}+}))$. Then there exist $\tilde{f}_{ij} \in H_0^{\infty}(\Sigma_{\frac{\pi}{2}+})$ such that $\tilde{f}_{ij}(\lambda^{\alpha}) = f_{ij}(\lambda)$. Now,
		\begin{align*}
			\norm{[f_{ij}(A)]} = \norm{[(\tilde{f}_{ij} \circ \cdot ^{\alpha})(A)]} = \norm{[\tilde{f}_{ij}(A^{\alpha})]} \le C \norm{[\tilde{f}_{ij}(B)]} = C \norm{[f_{ij}(B^{\frac{1}{\alpha}})]}. \quad \qedhere
		\end{align*} 
\end{proof}

The $p$-complete boundedness of the  functional calculus homomorphism extends from $H_0^{\infty}(\Sigma_{\frac{\pi}{2\alpha}+})$ to $H^{\infty}(\Sigma_{\frac{\pi}{2\alpha}+})$.

\begin{corollary}\label{thm:p-completely-bound-os2}
	Let $A$ be a sectorial operator on a subspace-quotient $SQ_X$ of a UMD-space $X$ with a bounded $H^{\infty}(\Sigma_{\theta})$-calculus for some $\theta < \frac{\pi}{2}$. Then for $\alpha \in [1, \frac{\pi}{2\theta})$ the $H^{\infty}$-calculus homomorphism
		\[ u\colon H^{\infty}(\Sigma_{\frac{\pi}{2\alpha}+}) \to \mathcal{B}(SQ_X) \]
	is $p$-completely bounded for $p \in (1,\infty)$ and the $p$-matrix normed space structure~\eqref{OS2} with respect to $L_p([0,1];X)$.

\end{corollary}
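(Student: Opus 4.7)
The plan is to use the regularizers $\rho_m$ introduced in Section~\ref{sec:functional_calculus} to bootstrap from the already established $H_0^{\infty}$-bound in Theorem~\ref{thm:p-completely-bound-os2-pseudo}. Set $Y \coloneqq L_p([0,1];X)$ and let $B$ be the negative generator of the shift group on $L_p(\IR;Y)$. Given $[f_{ij}] \in M_n(H^{\infty}(\Sigma_{\frac{\pi}{2\alpha}+}))$ all entries lie in a common space $H^{\infty}(\Sigma_{\theta_0})$ with $\theta_0 > \frac{\pi}{2\alpha}$, so that $f_{ij}\rho_m \in H_0^{\infty}(\Sigma_{\frac{\pi}{2\alpha}+})$ for every $m \ge 2$. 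Applied entrywise, Theorem~\ref{thm:p-completely-bound-os2-pseudo} then produces a constant $C > 0$, independent of $m$ and $n$, with
\[ \norm{[(f_{ij}\rho_m)(A)]}_{M_n(\mathcal{B}(SQ_X))} \le C \norm{[(f_{ij}\rho_m)(B^{1/\alpha})]}_{M_n(\mathcal{B}(L_p(\IR;Y)))}. \]

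Next I would bound each side in $m$ separately. On the right, multiplicativity of the $H_0^{\infty}$-calculus gives $(f_{ij}\rho_m)(B^{1/\alpha}) = f_{ij}(B^{1/\alpha})\rho_m(B^{1/\alpha})$, so the matrix factorizes as $[f_{ij}(B^{1/\alpha})] \cdot \diag(\rho_m(B^{1/\alpha}),\dots,\rho_m(B^{1/\alpha}))$ in $\mathcal{B}(\ell_p^n(L_p(\IR;Y)))$. Since $B$ is sectorial (it generates a bounded group on the UMD-space $L_p(\IR;Y)$), Proposition~\ref{prop:fractional_powers} makes $B^{1/\alpha}$ sectorial as well, and its pseudo-sectoriality yields $K \coloneqq \sup_m \norm{\rho_m(B^{1/\alpha})} < \infty$. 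Hence the right-hand side is at most $CK \norm{[f_{ij}(B^{1/\alpha})]}$ uniformly in $m$. On the left-hand side I would let $m \to \infty$: since $A$ is sectorial one has $\rho_m(A)x \to x$ strongly, whence $(f_{ij}\rho_m)(A)x = f_{ij}(A)\rho_m(A)x \to f_{ij}(A)x$ for every $i,j$; as $n$ is finite this gives strong operator convergence $[(f_{ij}\rho_m)(A)] \to [f_{ij}(A)]$ on $\ell_p^n(SQ_X)$, and lower semicontinuity of the operator norm with respect to the strong operator topology closes the argument:
\[ \norm{[f_{ij}(A)]} \le \liminf_{m \to \infty} \norm{[(f_{ij}\rho_m)(A)]} \le CK \norm{[f_{ij}(B^{1/\alpha})]}. \]

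The only real subtlety I expect is the asymmetric limiting argument: one cannot hope for norm convergence on either side, so the strategy is to control the right-hand side uniformly in $m$ by extracting the scalar factor $\rho_m(B^{1/\alpha})$ from the matrix, and then use strong operator convergence plus lower semicontinuity of the norm on the left-hand side, where dense range of $A$ is available. Everything else is essentially bookkeeping: checking that one can choose a common sector for all $n^2$ entries and that the $\ell_p^n$-direct sum plays well with entrywise strong convergence for fixed finite $n$.
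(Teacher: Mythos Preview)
Your proposal is correct and is essentially the same approach as the paper's: both approximate $f_{ij}$ by $f_{ij}\rho_m \in H_0^{\infty}$, invoke Theorem~\ref{thm:p-completely-bound-os2-pseudo}, factor out the diagonal $\rho_m(B^{1/\alpha})$ on the right to get a uniform bound via $\sup_m \norm{\rho_m(B^{1/\alpha})} < \infty$, and pass to the limit on the left using strong convergence $\rho_m(A)x \to x$. The only cosmetic difference is that the paper writes out the $\ell_p^n$-sums explicitly and takes the $\liminf$ inside, whereas you phrase the same step as lower semicontinuity of the operator norm under strong convergence.
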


\begin{proof}
	Let $Y \coloneqq L_p([0,1];X)$. The case of general $[f_{ij}] \in M_n(H^{\infty}(\Sigma_{\frac{\pi}{2\alpha}+}))$ follows from Theorem~\ref{thm:p-completely-bound-os2-pseudo} by the following approximation argument. One has $[\rho_k f_{ij}] \in M_n(H_0^{\infty}(\Sigma_{\frac{\pi}{2\alpha}+}))$ for all $k \in \IN$ and therefore obtains for some constant $C \ge 0$ that for all $n \in \IN$ and all $x_1, \ldots, x_n \in X$ 
		\begin{align*}
			\MoveEqLeft \sum_{i=1}^n \biggnorm{\sum_{j=1}^n f_{ij}(A) x_j}^p = \lim_{k \to \infty} \sum_{i=1}^n \biggnorm{\sum_{j=1}^n (\rho_k f_{ij})(A)x_j }^p \\
			& \le C^{p} \sum_{i=1}^n \norm{x_i}^p \liminf_{k \to \infty} \norm{[\rho_k(B^{\frac{1}{\alpha}}) f_{ij}(B^{\frac{1}{\alpha}}) ]}^p_{M_n(\mathcal{B}(L_p(\IR;Y)))} \\
			& \le C^{p} \sup_{k \in \IN} \norm{\rho_k (B^{\frac{1}{\alpha}} )}^p \norm{[f_{ij}(B^{\frac{1}{\alpha}})]}_{M_n(\mathcal{B}(L_p(\IR;Y)))}^p \sum_{i=1}^n \norm{x_i}^p.
		\end{align*}
	Since $\sup_{k \in \IN} \norm{\rho_k(B^{1/\alpha})} < \infty$, we showed $\norm{[f_{ij}(A)]} \le \tilde{C} \norm{[f_{ij}(B^{1/\alpha})]}$.
\end{proof}

\section{The Structure Theorem}\label{sec:structure_theorem}

We now apply Pisier's factorization theorem for $p$-completely bounded maps to the homomorphism obtained from the bounded $H^{\infty}$-calculus. We start with some terminology.

Let $X$ be a Banach space and $Y = L_p([0,1]; X)$ (for some $p \in (1, \infty)$). Further let $(\Omega_j, \mu_j)_{j \in J}$ be a family of measure spaces and $\mathcal{U}$ be an ultrafilter on $J$ and $\alpha \in [1, \infty)$. Then for each $j \in J$ and $f \in H^{\infty}(\Sigma_{\frac{\pi}{2\alpha}+})$ we have canonical maps
	\begin{align*}
		\pi_j(f)\colon L_p(\Omega_j; L_p(\IR; Y)) & \to L_p(\Omega_j; L_p(\IR;Y)) \\
		g & \mapsto [\omega \mapsto f(B^{1/\alpha})g(\omega)]. 
	\end{align*}
These mappings induce for all $f \in H^{\infty}(\Sigma_{\frac{\pi}{2\alpha}+})$ a map $\pi(f) \in \mathcal{B}(\hat{X})$ in the ultraproduct $\hat{X} \coloneqq \prod_{j \in J }L_p(\Omega_j; L_p(\IR; Y))/\mathcal{U}$ with $\norm{\pi(f)}\le \sup_{j \in J} \norm{\pi_j(f)} \le \norm{f(B^{1/\alpha})} \le C_{\alpha} \norm{f}_{H^{\infty}}$ by the boundedness of the $H^{\infty}$-calculus. More generally, each $T \in \mathcal{B}(L_p(\IR;Y))$ induces a map on the above ultraproduct with operator norm at most $\norm{T}$. Note that this in particular implies that a holomorphic mapping $z \mapsto T_z$ in $\mathcal{B}(L_p(\IR;Y))$ induces a holomorphic mapping in this ultraproduct. 

We now formulate a special case of Pisier's factorization theorem.

\begin{theorem}[Pisier's factorization theorem for completely bounded maps]\label{thm:pisier} Let $\mathcal{A} \subset \mathcal{B}(Z)$ for a Banach space $Z$ be a unital subalgebra and $u\colon \mathcal{A} \to \mathcal{B}(X)$ be a $p$-completely bounded unital algebra homomorphism for some $p \in (1,\infty)$. Then there exists a family of measure spaces $(\Omega_j, \mu_j)_{j \in J}$ and an ultrafilter $\mathcal{U}$ on $J$ such that for the ultraproduct $\hat{X} \coloneqq \prod_{j \in J} L_p(\Omega_j; Z) / \mathcal{U}$ there are closed subspaces $N \subset M \subset \hat{X}$ and an isomorphism $S\colon X \to M/N$ such that for $a \in \mathcal{A}$ the operators $\pi(a)$ defined as the ultraproducts $\prod_{j \in J} \pi_j(a) / \mathcal{U}$, where
	\[ \pi_j(a)\colon L_p(\Omega_j;Z) \to L_p(\Omega_j;Z), \qquad (\pi_j(a)f)(\omega) \coloneqq a(f(\omega)) \]
satisfy $\pi(a)M \subset M$ and $\pi(a)N \subset N$ for all $a \in \mathcal{A}$ and the induced mappings $\hat{\pi}(a)\colon M/N \to M/N$   satisfy
	\[ u(a) = S^{-1} \hat{\pi}(a) S \qquad \text{for all } a \in \mathcal{A}. \]
\end{theorem}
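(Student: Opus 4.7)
The plan is to follow Pisier's original strategy for $p$-completely bounded homomorphisms: reduce to a finite-dimensional approximate factorization via Hahn--Banach, and then glue the pieces together with an ultraproduct.

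First I would prove the following finite-dimensional statement. For each finite $F \subset \mathcal{A}$ (containing the unit), each finite tuple $x_1, \ldots, x_n \in X$, and each $\epsilon > 0$, there exist a finite measure space $(\Omega, \mu)$, vectors $\xi_1, \ldots, \xi_n \in L_p(\Omega; Z)$ with $\norm{\xi_i} \le (1+\epsilon) \norm{x_i}$, and a linear map $Q$ defined on $\linspan\{\pi(a) \xi_i : a \in \mathcal{A},\, 1 \le i \le n\}$ of norm at most $(1+\epsilon) \norm{u}_{p\text{-cb}}$, satisfying $Q \pi(a) \xi_i = u(a) x_i$ for all $a \in F$ and all $i$. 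Here $\pi(a)$ is the canonical diagonal action on $L_p(\Omega; Z)$ given by $(\pi(a)f)(\omega) = a(f(\omega))$. This is the Hahn--Banach core of the proof: one sets up a convex set parametrizing candidate factorizations in a suitable product space, whose non-separation from the origin is equivalent to the existence of such $(\xi_i, Q)$, and one verifies the hypothesis of the separation theorem using the matricial estimate $\norm{[u(a_{ij})]}_{\mathcal{B}(\ell_p^n(X))} \le \norm{u}_{p\text{-cb}} \norm{[a_{ij}]}_{\mathcal{B}(\ell_p^n(Z))}$ coming from $p$-complete boundedness.

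Next I would order the index set $J$ of triples $j = (F, (x_1, \ldots, x_n), \epsilon)$ by refinement, pick an ultrafilter $\mathcal{U}$ on $J$ finer than the resulting order filter, and apply Step~1 at each $j$ to obtain data $(\Omega_j, \mu_j)$, vectors $\xi_i^{(j)}$, and bounded maps $Q^{(j)}$. Inside the ultraproduct $\hat{X} \coloneqq \prod_j L_p(\Omega_j; Z)/\mathcal{U}$ these assemble into a bounded linear map $\hat{J}\colon X \to \hat{X}$ and a bounded map $\hat{Q}$ defined on the $\pi(\mathcal{A})$-orbit of $\hat{J}(X)$ back to $X$, with $u(a) = \hat{Q} \pi(a) \hat{J}$ globally. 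I would then define $M \coloneqq \overline{\linspan}\{\pi(a) \hat{J}(x) : a \in \mathcal{A}, x \in X\}$ and $N \coloneqq \ker(\hat{Q}|_M)$. Since $\mathcal{A}$ is an algebra, $M$ is $\pi(\mathcal{A})$-invariant; the intertwining $\hat{Q} \pi(a) = u(a) \hat{Q}$ on $M$ (which follows from $u$ being a homomorphism and holds on the dense subspace generated by elements $\pi(b) \hat{J}(x)$) forces $N$ to be $\pi(\mathcal{A})$-invariant as well. The induced bounded bijection $M/N \to X$ is an isomorphism by the open mapping theorem, and its inverse is the desired $S$.

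The main obstacle is the Hahn--Banach step in Step~1. One must identify exactly the right convex set, parametrizing pairs of vectors in $L_p(\Omega; Z)$ together with associated linear functionals, so that separation from the origin is blocked precisely by the $p$-complete boundedness hypothesis; this is where the matricial $L_p$-structure must be used to its full strength. The subsequent ultraproduct and quotient constructions are comparatively soft once this finite-dimensional core is established.
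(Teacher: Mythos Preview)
The paper does not actually prove this theorem: it is stated as Pisier's result and attributed to \cite[Theorem~3.2]{Pis90}, with only a two-sentence sketch of the reduction. Your outline is essentially Pisier's own argument (finite-dimensional approximate factorization via a Hahn--Banach/separation step, then an ultraproduct glue), so in that sense you are proposing to reprove what the paper merely quotes.

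There is one organizational difference worth noting. The paper's sketch says Pisier first handles the case of a map $u\colon \mathcal{A} \to \mathcal{B}(\ell_1(\Gamma), \ell_{\infty}(\Gamma'))$, where the factorization can be arranged with $M = \hat{X}$ and $N = 0$ (no quotient at all), and then deduces the general case by sandwiching $u$ between a metric surjection $Q\colon \ell_1(\Gamma) \to X$ and an isometric embedding $J\colon X \to \ell_{\infty}(\Gamma')$; the subspace-quotient structure then comes for free from this sandwiching via a standard lemma (the one cited as \cite[Proposition~4.2]{Pis01}). You instead build $M$ and $N$ directly as the $\pi(\mathcal{A})$-orbit of $\hat{J}(X)$ and the kernel of $\hat{Q}$. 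Both routes are valid; the $\ell_1/\ell_\infty$ detour has the advantage that the Hahn--Banach step only needs to produce \emph{vectors} (the $\xi_i$) and \emph{functionals} (the components of $\hat{Q}$ into $\ell_\infty$) separately, which makes the convex-separation argument cleaner.

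One small point to tighten: in your Step~1 you define $Q$ on $\linspan\{\pi(a)\xi_i : a \in \mathcal{A}\}$ but only impose the relation $Q\pi(a)\xi_i = u(a)x_i$ for $a \in F$. For the ultraproduct glue to work you only need the latter, so the domain of $Q$ should really be $\linspan\{\pi(a)\xi_i : a \in F\}$ (finite-dimensional), with the norm bound holding there; the full intertwining on $M$ is then recovered in the limit exactly as you describe using the homomorphism property.
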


This follows from~\cite[Theorem~3.2]{Pis90}. There the theorem is first proved for a map $u\colon \mathcal{A} \to \mathcal{B}(\ell_1(\Gamma), \ell_{\infty}(\Gamma'))$ for some sets $\Gamma$ and $\Gamma'$, where one can choose $M = \hat{X}$ and $N = 0$. In the general case of Theorem~\ref{thm:pisier} one then chooses a metric surjection $Q\colon \ell_1(\Gamma) \to X$ (i.e. $Q^*$ is an isometric embedding) and an isometric embedding $J\colon X  \to \ell_{\infty}(\Gamma')$ and deduces the theorem from the special case above applied to $a \mapsto Ju(a)Q$.

In our concrete case of the functional calculus, we get the following.

\begin{theorem}\label{thm:pisier_applied_to_functional_calculus} Let $A$ be a sectorial operator on a subspace-quotient $SQ_X$ of a UMD-space $X$ with a bounded $H^{\infty}(\Sigma_{\theta})$-calculus for some $\theta < \frac{\pi}{2}$. Then for each $\psi \in (\theta, \frac{\pi}{2})$ there exist $(\Omega_j, \mu_j)_{j \in J}$, $\mathcal{U}$ and $\pi$ as above together with subspaces $N \subset M$ of the ultraproduct
	\[ \prod_{j \in J }L_p(\Omega_j; L_p(\IR; L_p([0,1];X)))/\mathcal{U} \] 
	such that
	\begin{equation*} 
		\pi(f)M \subset M \quad \text{and} \quad \pi(f)N \subset N \qquad \text{for all } f \in H^{\infty}(\Sigma_{\psi}).
	\end{equation*}
	Moreover, if $\hat{\pi}(f)\colon M / N \to M / N$ denotes the induced mappings, there exists an isomorphism $S\colon SQ_X \to M / N$ such that
	\[ u(f) = S^{-1} \hat{\pi}(f) S \qquad \text{for all } f \in H^{\infty}(\Sigma_{\psi}). \]
\end{theorem}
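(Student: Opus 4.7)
The plan is to apply Pisier's factorization theorem (Theorem~\ref{thm:pisier}) directly to the $p$-completely bounded homomorphism delivered by Corollary~\ref{thm:p-completely-bound-os2}, after choosing the parameter $\alpha$ so that $H^{\infty}(\Sigma_{\psi})$ sits inside the domain $H^{\infty}(\Sigma_{\frac{\pi}{2\alpha}+})$.

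Given $\psi \in (\theta, \frac{\pi}{2})$, I would first pick some $\alpha \in (\frac{\pi}{2\psi}, \frac{\pi}{2\theta})$; this interval is nonempty because $\theta < \psi$, and any such $\alpha$ automatically satisfies $\alpha > \frac{\pi}{2\psi} > 1$, placing it in the permissible range $[1, \frac{\pi}{2\theta})$ required by Corollary~\ref{thm:p-completely-bound-os2}. With this choice $\frac{\pi}{2\alpha} < \psi$, hence $H^{\infty}(\Sigma_{\psi}) \subset H^{\infty}(\Sigma_{\frac{\pi}{2\alpha}+})$.

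Next, Corollary~\ref{thm:p-completely-bound-os2} asserts that the functional-calculus homomorphism $u\colon H^{\infty}(\Sigma_{\frac{\pi}{2\alpha}+}) \to \mathcal{B}(SQ_X)$ is $p$-completely bounded with respect to the structure~\eqref{OS2}; equivalently, the algebra $\mathcal{A} \coloneqq H^{\infty}(\Sigma_{\frac{\pi}{2\alpha}+})$ is embedded as a unital subalgebra into $\mathcal{B}(Z)$ for $Z \coloneqq L_p(\IR; L_p([0,1]; X))$ via $f \mapsto f(B^{1/\alpha})$, and $u$ is $p$-completely bounded as a map $\mathcal{A} \to \mathcal{B}(SQ_X)$. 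Unitality and the algebra-homomorphism property are built into the extended $H^{\infty}$-calculus recalled in Section~\ref{sec:functional_calculus}.

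I would then invoke Theorem~\ref{thm:pisier} with these data. It produces a family $(\Omega_j, \mu_j)_{j \in J}$, an ultrafilter $\mathcal{U}$, subspaces $N \subset M$ of
\[ \prod_{j \in J} L_p(\Omega_j; Z) / \mathcal{U} = \prod_{j \in J} L_p(\Omega_j; L_p(\IR; L_p([0,1]; X))) / \mathcal{U}, \]
and an isomorphism $S \colon SQ_X \to M/N$ such that $\pi(a) M \subset M$, $\pi(a) N \subset N$ and $u(a) = S^{-1} \hat{\pi}(a) S$ for every $a \in \mathcal{A}$. A direct inspection shows that the operator $\pi(a)$ produced by Theorem~\ref{thm:pisier}, applied to $a = f(B^{1/\alpha})$, coincides pointwise on the representing sequences with the map $\pi(f)$ defined in the ultraproduct construction preceding Theorem~\ref{thm:pisier}. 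Restricting the conclusion to $f \in H^{\infty}(\Sigma_{\psi}) \subset \mathcal{A}$ yields the theorem. The only thing that might look like an obstacle is purely administrative, namely checking that a suitable $\alpha$ exists and that the two different $\pi$'s are indeed the same map; no further analytic input is needed beyond what Corollary~\ref{thm:p-completely-bound-os2} and Theorem~\ref{thm:pisier} already supply.
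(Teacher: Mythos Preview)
Your proof is correct and follows essentially the same route as the paper: choose $\alpha$ so that $H^{\infty}(\Sigma_{\psi})$ lies in the domain handled by Corollary~\ref{thm:p-completely-bound-os2}, then apply Pisier's factorization theorem (Theorem~\ref{thm:pisier}). The paper's own argument is slightly terser---it applies Pisier directly to $u\colon H^{\infty}(\Sigma_{\psi}) \to \mathcal{B}(SQ_X)$ rather than to the larger algebra $H^{\infty}(\Sigma_{\frac{\pi}{2\alpha}+})$ and then restricting---but this is a cosmetic difference, and your explicit verification of the choice of $\alpha$ and the identification of the two $\pi$'s is a welcome clarification.
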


\begin{proof}
	Let $\psi > \theta$. We have shown in Theorem~\ref{thm:p-completely-bound-os2} that for a suitably chosen $\alpha > 1$ the functional calculus homomorphism $u\colon H^{\infty}(\Sigma_{\psi}) \to \mathcal{B}(SQ_X)$ is $p$-completely bounded for the $p$-matrix normed space structure~\eqref{OS2} with respect to $L_p([0,1];X)$. We now apply Theorem~\ref{thm:pisier} to $u$.
\end{proof}

\section{Properties of the Semigroup}

In particular in Theorem~\ref{thm:pisier_applied_to_functional_calculus} one has, for $\psi < \frac{\pi}{2}$, that $e_z \in H^{\infty}(\Sigma_{\psi})$ for $z \in \Sigma_{\pi/2 - \psi}$ and one obtains $T(z) = u(e_z) = S^{-1} \hat{\pi}(e_z) S$, where $(T(z))_{z \in \Sigma_{\pi/2 - \omega(A)}}$ is the analytic semigroup generated by $-A$. 

Therefore we are interested in the properties of the semigroup $(\pi(e_z))_{z \in \Sigma_{\frac{\pi}{2}-\psi}}$ which in turn leads us to the investigation of the semigroup generated by $-B^{1/\alpha}$. Here one has the following general result for fractional powers of semigroup generators.

\begin{theorem}\label{thm:fractional_positive} Let $-A$ be the generator of a $C_0$-semigroup $(T(t))_{t \ge 0}$ on some Banach space $X$. Then for $\alpha \in (0,1)$ the bounded analytic $C_0$-semigroup\\ $(T_{\alpha}(z))_{z \in \Sigma_{\frac{\pi}{2} - \omega(A^{\alpha})}}$ generated by $-A^{\alpha}$ has the following properties:
	\begin{itemize}
		\item one has $\norm{T_{\alpha}(t)} \le 1$ for all $t \ge 0$ if $\norm{T(t)} \le 1$ for all $t \ge 0$.
	\end{itemize}
	Moreover, if $X$ is a Banach lattice, then one has
	\begin{itemize}
		\item $T_{\alpha}(t) \ge 0$ if $T(t) \ge 0$.
	\end{itemize}
\end{theorem}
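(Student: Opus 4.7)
The approach is via Bochner subordination. For $\alpha \in (0,1)$ there is a family of probability densities $(f_{t,\alpha})_{t > 0}$ on $(0,\infty)$, the one-sided $\alpha$-stable densities, characterised by the Laplace transform identity
\[
\int_0^\infty e^{-\lambda s} f_{t,\alpha}(s)\, ds = e^{-t\lambda^\alpha} \qquad (\lambda \ge 0).
\]
In particular $f_{t,\alpha} \ge 0$ almost everywhere and $\int_0^\infty f_{t,\alpha}(s)\, ds = 1$. Existence and non-negativity of these densities can be obtained from the complete monotonicity of $\lambda \mapsto e^{-t\lambda^\alpha}$ via Bernstein's theorem, or from the explicit construction of the one-sided $\alpha$-stable subordinator.

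The key step is to establish the subordination representation
\[
T_\alpha(t) x = \int_0^\infty f_{t,\alpha}(s)\, T(s) x\, ds \qquad (x \in X,\ t > 0),
\]
understood as a Bochner integral (well-defined since $s \mapsto T(s)x$ is strongly continuous and locally bounded while $f_{t,\alpha} \in L^1(0,\infty)$; in the contractive and positive settings where the theorem will be applied, global integrability is automatic). This is the classical Bochner--Phillips subordination formula. In a detailed write-up one first defines $S(t)$ by the right-hand side, verifies that $(S(t))_{t \ge 0}$ is a $C_0$-semigroup using the convolution identity $f_{t,\alpha} * f_{u,\alpha} = f_{t+u,\alpha}$ (which is immediate from multiplying Laplace transforms) together with dominated convergence for strong continuity at $t = 0$, and then computes the Laplace transform $\int_0^\infty e^{-\lambda t} S(t)\, dt$ and matches it with Balakrishnan's integral representation of $(\lambda + A^\alpha)^{-1}$, thereby identifying the generator of $S$ as $-A^\alpha$; by uniqueness, $S(t) = T_\alpha(t)$.

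Granted the representation, both assertions are immediate. If $\norm{T(s)} \le 1$ for all $s \ge 0$, then for every $x \in X$
\[
\norm{T_\alpha(t) x} \le \int_0^\infty f_{t,\alpha}(s) \norm{T(s) x}\, ds \le \norm{x} \int_0^\infty f_{t,\alpha}(s)\, ds = \norm{x},
\]
whence $\norm{T_\alpha(t)} \le 1$. If $X$ is a Banach lattice and $T(s) \ge 0$ for all $s \ge 0$, then for each $0 \le x \in X$ the integrand $f_{t,\alpha}(s) T(s) x$ lies in the positive cone of $X$ for almost every $s$; since this cone is closed, the Bochner integral $T_\alpha(t)x$ belongs to it as well, proving $T_\alpha(t) \ge 0$.

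The principal technical obstacle is the identification of the subordinated semigroup with the one generated by $-A^\alpha$ in the fractional power sense used in the paper. This amounts to a Laplace-transform computation matched against Balakrishnan's formula for $(\lambda + A^\alpha)^{-1}$, or can simply be cited from the classical literature on Bochner subordination (e.g.\ Yosida's treatise or the monograph of Martínez Carracedo and Sanz Alix). The construction of the stable density $f_{t,\alpha}$ is a second non-trivial input, but is equally standard.
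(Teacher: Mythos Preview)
Your proof is correct and follows essentially the same route as the paper: both rely on the Bochner subordination formula $T_\alpha(t)x = \int_0^\infty f_{t,\alpha}(s)\,T(s)x\,ds$ with $f_{t,\alpha} \ge 0$ and $\int_0^\infty f_{t,\alpha}(s)\,ds = 1$, from which contractivity and positivity follow immediately. The paper simply cites Yosida~[IX,~11] for this representation, whereas you sketch its derivation via the stable densities and Laplace transform matching; the underlying argument is identical.
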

\begin{proof}
	The assertions follow from the explicit representation of the semigroup $(T_{\alpha}(t))_{t \ge 0}$ (see~\cite[IX, 11]{Yos80})
		\[ T_{\alpha}(t)x = \int_0^{\infty} f_{t, \alpha}(s) T(s)x \, ds \qquad \text{for all } t > 0 \text{ and } x \in X, \]
	where $f_{t, \alpha}$ is a function with $f_{t, \alpha} \ge 0$ and $\int_0^{\infty} f_{t, \alpha}(s) \, ds = 1$.
\end{proof}

In our case we apply the above theorem to the generators $-B$ of the contractive shift semigroups. Moreover, if $X$ and therefore $L_p([0,1];X)$ are Banach lattices, then the shift semigroup is positive with respect to the natural Banach lattice structure on $L_p(\IR; L_p([0,1];X))$. Hence, $(\pi(e_t))_{t \ge 0}$ is a (positive if $X$ is a Banach lattice) contractive semigroup. Here we used the fact that the ultraproduct of positive operators is positive on ultraproducts of Banach lattices.

\begin{remark}\label{rem:shift_power_contractive_sector}
	One can now ask if the analytic semigroup generated by $-B^{\alpha}$ is contractive on the whole sector where the semigroup is defined. Note that the Fourier transform diagonalizes $B$ to the multiplication operator with $ix$. The semigroup $(T_{\alpha}(z))$ generated by $-B^{\alpha}$ is then given by the Fourier multipliers $\exp(-z (ix)^{\alpha})$ for $\abs{\arg z} < \frac{\pi}{2} (1-\alpha) \eqqcolon \phi$. Now, if $X$ is a Hilbert space, then Plancherel's theorem yields that $(T_{\alpha}(z))$ is a sectorial contractive semigroup on $L_2(\IR;X)$. However, even in the case $X = \IC$ one does not have contractivity on the whole sector for $p \in (1, \infty) \setminus \{2\}$. Indeed, $\norm{T(z)}$ is even unbounded on $\Sigma_{\phi} \cap \{ z: \abs{z} \le 1\}$. For this, by~\cite[Proposition 3.9.1]{ABHN11}, it suffices to show that $-e^{i\phi} A^{\alpha}$ does not generate a $C_0$-semigroup on $L_p$.
	
	Assume that this would be the case. Then in particular $\exp(-e^{i\phi} (ix)^{\alpha})$ is a bounded Fourier multiplier on $L_p$. Using the boundedness of the Hilbert transform for $p \in (1, \infty)$ one sees that then
		\[ m(\xi) \coloneqq \begin{cases} e^{-i \xi^{\alpha}} & ( \xi \ge 2), \\ 0 & (\xi < 2) \end{cases} \]
	is also a bounded Fourier multiplier. However, this contradicts~\cite[Theorem~E.4b) (i)]{ABHN11}.
\end{remark}

\begin{proposition}\label{prop:first_factorization} Let $A$ be a pseudo-sectorial operator on a subspace-quotient $SQ_X$ of a UMD-Banach lattice $X$ with a bounded $H^{\infty}(\Sigma_{\theta})$-calculus for some $\theta < \frac{\pi}{2}$ and let $(T(z))$ be the bounded analytic $C_0$-semigroup generated by $-A$. Then for each $\psi \in (\theta, \frac{\pi}{2})$ there exists a UMD-Banach lattice $\hat{X}$ and a bounded analytic semigroup $(\Pi(z))_{z \in \Sigma_{\pi/2 - \psi}}$ on $\hat{X}$, positive and contractive on the real line, together with subspaces $N \subset M \subset \hat{X}$ which are invariant under $(\Pi(z))$ and an isomorphism $S\colon SQ_X \to M/N$ such that the induced semigroup $(\hat{\Pi}(z))_{z \in \Sigma_{\pi/2 - \psi}}$ on $M/N$ satisfies
	\[ T(z) = S^{-1} \hat{\Pi}(z) S \qquad \text{for all } z \in \Sigma_{\pi/2 - \psi}. \]
Moreover, if $X$ is separable, then $\hat{X}$ can be chosen separable as well.
\end{proposition}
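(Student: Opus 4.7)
Fix $\psi \in (\theta, \pi/2)$ and apply Theorem~\ref{thm:pisier_applied_to_functional_calculus} to produce $\hat{X}_0 \coloneqq \prod_{j \in J} L_p(\Omega_j; L_p(\IR; L_p([0,1];X)))/\mathcal{U}$, invariant closed subspaces $N \subset M$, an isomorphism $S\colon SQ_X \to M/N$, and the algebra homomorphism $f \mapsto \pi(f)$ satisfying $u(f) = S^{-1}\hat{\pi}(f)S$ on $H^{\infty}(\Sigma_{\psi})$. I set $\Pi(z) \coloneqq \pi(e_z)$ for $z \in \Sigma_{\pi/2-\psi}$; the multiplicativity of $\pi$ together with $e_{z_1}e_{z_2} = e_{z_1+z_2}$ gives the semigroup law, the estimate $\norm{\Pi(z)} \le \norm{e^{-zB^{1/\alpha}}}_{\mathcal{B}(L_p(\IR;L_p([0,1];X)))} \le C$ coming from the bounded $H^{\infty}$-calculus of $B^{1/\alpha}$ yields uniform boundedness on $\Sigma_{\pi/2-\psi}$, and the principle recorded just before Theorem~\ref{thm:pisier} lifts the holomorphic map $z \mapsto e^{-z B^{1/\alpha}}$ to holomorphy of $\Pi$ on the ultraproduct; moreover $T(z) = u(e_z) = S^{-1}\hat{\Pi}(z) S$ is immediate. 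The ambient space $\hat{X}_0$ is a UMD-Banach lattice, because $L_p$-Bochner spaces and ultraproducts preserve both the UMD property (with uniform UMD constants) and the Banach-lattice structure. For positivity and contractivity on the real line, each $\pi_j(e_t)$ acts pointwise on $\Omega_j$ as $e^{-tB^{1/\alpha}}$; the shift group generated by $-B$ is positive and contractive on the Banach lattice $L_p(\IR; L_p([0,1];X))$, so Theorem~\ref{thm:fractional_positive} applied with parameter $1/\alpha \in (0,1]$ transfers positivity and contractivity to $(e^{-tB^{1/\alpha}})_{t\ge 0}$, and both properties are then stable under the pointwise extension $\pi_j$ and under ultraproducts of positive contractions on Banach lattices.

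For the separability addendum, suppose $X$ is separable, so that $M/N \cong SQ_X$ is separable while $\hat{X}_0$ typically is not. I first choose a countable dense sequence $(y_n)$ in $M/N$ together with lifts $m_n \in M$ satisfying $\norm{m_n} \le 2 \norm{y_n}_{M/N}$, then fix a countable dense subset $Q \subset \Sigma_{\pi/2-\psi}$, and form the closed sublattice $\hat{X} \subset \hat{X}_0$ generated by $\{\Pi(q)m_n : n \in \IN, \, q \in Q\}$ under rational linear combinations and the lattice operations $\vee, \wedge, \lvert \cdot \rvert$, iterated countably many times so as to reach simultaneous closure under all operations. Continuity of $z \mapsto \Pi(z)$ and continuity of the lattice operations then force $\hat{X}$ to be a separable closed sublattice of $\hat{X}_0$ invariant under the entire semigroup $(\Pi(z))_{z \in \Sigma_{\pi/2-\psi}}$, hence itself a separable UMD-Banach lattice. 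Setting $M' \coloneqq M \cap \hat{X}$ and $N' \coloneqq N \cap \hat{X}$, the natural inclusion induces a continuous injection $\iota\colon M'/N' \to M/N$. The norm-controlled lifts allow any $y \in M/N$ to be written as an absolutely convergent series $y = \sum_k y_{n_k}$ whose associated series $\sum_k m_{n_k}$ converges in $M'$ to a preimage of $y$, so $\iota$ is surjective and hence an isomorphism by the open mapping theorem. Composing with $S$ gives the required isomorphism $SQ_X \to M'/N'$, and positivity, contractivity on the real line, boundedness and analyticity of $(\Pi(z))$ are inherited by restriction to the $\Pi$-invariant sublattice $\hat{X}$.

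The main obstacle is precisely this separability step: a countable-generated $\Pi$-invariant sublattice of $\hat{X}_0$ only automatically yields a continuous injection $M'/N' \to M/N$ with dense image, and it is the controlled choice of lifts $m_n$ together with the absolute-summability argument that upgrades this to surjectivity and, via the open mapping theorem, to the desired Banach space isomorphism preserving the whole subspace-quotient picture.
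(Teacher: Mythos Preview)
Your treatment of the sectorial case matches the paper's: invoke Theorem~\ref{thm:pisier_applied_to_functional_calculus}, set $\Pi(z)=\pi(e_z)$, and read off analyticity, the UMD-lattice structure of the ultraproduct, and positivity/contractivity on $\IR_+$ via Theorem~\ref{thm:fractional_positive}.

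There is a genuine gap, however: Theorem~\ref{thm:pisier_applied_to_functional_calculus} requires $A$ to be \emph{sectorial} (dense range), while the proposition is stated for \emph{pseudo}-sectorial $A$, and you never address the reduction. The paper handles it by decomposing $SQ_X=\overline{R(A)}\oplus N(A)$, applying the sectorial case to $A|_{\overline{R(A)}}$, extending the resulting representation by the identity on the $N(A)$-summand (so that $T(z)=V_2\hat{\tilde\pi}(e_z)V_1$ for suitable $V_1,V_2$ on $\hat X\oplus X$), and then invoking \cite[Proposition~4.2]{Pis01} once more to convert this factorization back into a genuine similarity with invariant subspaces. Without this step your argument only covers operators with dense range.

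Your separability argument also takes a different route from the paper. The paper exploits the concrete form of Pisier's factorization $\ell_1\xrightarrow{v_1}\hat X_0\xrightarrow{v_2}\ell_\infty$: it builds a separable $\Pi$-invariant closed sublattice by starting from $v_1(\ell_1)$ and inductively alternating lattice closure with application of $\Pi(q)$ for rational $q$, then simply reruns Pisier's construction of $M,N,S$ inside the smaller space---no isomorphism $M'/N'\cong M/N$ needs to be checked. Your direct approach via lifts $m_n$ and the open mapping theorem can be made to work, but two points are loose as written. First, your generating set $\{\Pi(q)m_n:q\in Q\subset\Sigma_{\pi/2-\psi}\}$ need not contain the $m_n$ themselves, since $(\Pi(z))$ is not strongly continuous at $0$ on the ultraproduct (this is exactly what the next section is about); you must add the $m_n$ explicitly to the generators for the series $\sum m_{n_k}$ to land in $M'$. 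Second, a single lattice closure of a $\Pi(Q)$-orbit is not $\Pi$-invariant, because $\Pi(q)$ does not commute with $\vee,\wedge$; you need to alternate lattice closure and semigroup action in an inductive tower, as the paper does, rather than closing once under lattice operations.
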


\begin{proof} Let $\psi \in (\theta, \frac{\pi}{2})$. We first assume that $A$ is sectorial. For $z \in \Sigma_{\frac{\pi}{2}- \psi}$ we set $\Pi(z) \coloneqq \pi(e_z)$, where $\pi$ is obtained from Theorem~\ref{thm:pisier_applied_to_functional_calculus}. We denote the ultraproduct constructed there by $\hat{X}$. Note that the analyticity of the semigroup $(\Pi(z))$ follows from the remarks at the beginning of Section~\ref{sec:structure_theorem}. We now verify that $\hat{X}$ is a UMD-space. Notice that the UMD-property is a super-property, that is a property which is inherited by spaces which are finitely representable in a space having this property (for an introduction to these notions we refer to~\cite[Chapter~3]{Pis11b}). Let again be $Y = L_p([0,1];X)$ for $p \in (1,\infty)$. Now, $L_p(\Omega_j; L_p(\IR;Y))$ is finitely representable in $\ell_p(L_p(\IR;Y))$ (this can be proved as in~\cite[Theorem~6.2]{FHH+11}) and therefore $\hat{X} =\prod_{j \in J} L_p(\Omega_j;L_p(\IR;Y)) / \mathcal{U}$ is finitely representable in (see $\ell_p(L_p(\IR;Y))$~\cite[Lemma~3.4.8]{Pis11b}). This shows that $\hat{X}$ is a UMD-space because $X$ and therefore $\ell_p(L_p(\IR;Y))$ are UMD-spaces.

Now assume that $SQ_X$ is separable. Then there exist a metric surjection $Q\colon \ell_1 \to SQ_X$ and an isometric embedding $J\colon SQ_X \to \ell_{\infty}$. By the short description of the proof of Pisier's factorization theorem given above, the ultraproduct $\hat{X}$ is constructed via a reduction to the completely bounded map $\ell_1 \xrightarrow{Q} SQ_X \xrightarrow{u(f)} SQ_X \xrightarrow{J} \ell_{\infty}$ which factorizes as
	\[ \ell_1 \xrightarrow{v_1} \hat{X} \xrightarrow{\pi(f)} \hat{X} \xrightarrow{v_2} \ell_{\infty} \]
for two maps $v_1\colon \ell_1 \to \hat{X}$ and $v_2\colon \hat{X} \to \ell_{\infty}$. We can now replace $\hat{X}$ by a separable closed vector sublattice constructed as follows: Let $X_0$ be the closed vector sublattice generated by $v_1(\ell_1)$. Then for $n \ge 1$ define $X_n$ inductively as the closed vector sublattice generated by elements of the form $\Pi(q)X_{n-1}$, where $q$ runs through $(\IQ + i\IQ) \cap \Sigma_{\frac{\pi}{2} - \psi}$. Then for all $n \in \IN_{0}$ the lattice $X_n$ is separable and by the continuity of the semigroup one has $\Pi(z)X_n \subset X_{n+1}$ for all $z \in \Sigma_{\frac{\pi}{2} - \psi}$. Hence, the closure of $\hat{Y} \coloneqq \cup_{n \in \IN_{0}} X_n$ is a closed separable vector sublattice of $\hat{X}$ which is invariant under the semigroup $(\Pi(z))$. We may now finish the proof with $\hat{X}$ replaced by $\hat{Y}$.

Now let $A$ be pseudo-sectorial. Then one can decompose $SQ_X$ into $SQ_X = \overline{R(A)} \oplus N(A)$ such that $A$ is of the form $A = \begin{pmatrix} A_{00} & 0 \\ 0 & 0 \end{pmatrix}$. Then $\overline{R(A)}$ is a subspace-quotient of $X$ as well and therefore, by Corollary~\ref{thm:p-completely-bound-os2}, the sectorial operator $A_{00}$ on $\overline{R(A)}$ has a $p$-completely bounded $H^{\infty}(\Sigma_{\psi})$-calculus for $p \in (1, \infty)$ and the $p$-matrix normed space structure \eqref{OS2} with respect to $L_p([0,1];X)$. Hence, by Theorem~\ref{thm:pisier_applied_to_functional_calculus} and the first part of the proof there exists a UMD-Banach lattice $\hat{X}$ (which is separable if $SQ_X$ is separable), a semigroup $(\Pi(z))_{z \in \Sigma_{\frac{\pi}{2} - \psi}}$ on $\hat{X}$, $(\Pi(z))$-invariant subspaces $N \subset M \subset \hat{X}$ and an isomorphism $S\colon \overline{R(A)} \to M/N$ such that the induced semigroup $(\hat{\Pi}(z))_{z \in \Sigma_{\frac{\pi}{2} - \psi}}$ on $M / N$ satisfies
 	\[ T_{|\overline{R(A)}}(z) = S^{-1} \hat{\Pi}(z) S \qquad \text{for all } z \in \Sigma_{\frac{\pi}{2} - \psi}. \]
Note that with respect to the decomposition $X = \overline{R(A)} \oplus N(A)$ one has $T(z) = T_{|\overline{R(A)}}(z) \oplus \Id$. Let now $P$ be the projection onto $N(A)$ and let $\hat{Z} = \hat{X} \oplus X$ be the direct sum with its natural Banach lattice structure. Clearly, $\hat{Z}$ is UMD and separable if $X$ is separable. Now define $V_1\colon SQ_X \to M/N \oplus SQ_X$ as the matrix $V_1 = \begin{pmatrix} S & 0 \\ 0 & \iota \end{pmatrix}$, where $\iota$ is the inclusion of $N(A)$ into $SQ_X$, and $V_2\colon M/N \oplus SQ_X \to SQ_X$ as the matrix $V_2 = \begin{pmatrix} S^{-1} & 0 \\ 0 & P \end{pmatrix}$. Let $\tilde{\pi}(f)$ be the extension of $\pi$ to $\hat{Z}$ by the identity on the second component for all $f \in H^{\infty}(\Sigma_{\psi})$. If we also extend the functional calculus homomorphism $u\colon H^{\infty}(\Sigma_{\psi}) \to \mathcal{B}(\overline{R(A)})$ to the homomorphism $\tilde{u}\colon H^{\infty}(\Sigma_{\psi}) \to \mathcal{B}(X)$ by the identity on the second component, then we have for the compression $\hat{\tilde{\pi}}$ of $\tilde{\pi}$ to $M/N \oplus SQ_X$
	\[ \tilde{u}(f) = V_2 \hat{\tilde{\pi}}(f) V_1 \qquad \text{for all } f \in H^{\infty}(\Sigma_{\psi}). \]
In particular, note that $\tilde{u}$ and $\tilde{\pi}$ are constructed in a way such that one has
	\[ T(z) = \tilde{u}(e_z) = V_2 \hat{\tilde{\pi}}(e_z) V_1 \qquad \text{for all } z \in  \Sigma_{\frac{\pi}{2} - \psi}. \]
We can now apply \cite[Proposition~4.2]{Pis01} which shows that there exist subspaces $E_2 \subset E_1 \subset M/N \oplus SQ_X$ being invariant under $\hat{\tilde{\pi}}(f)$ for all $f \in H^{\infty}(\Sigma_{\psi})$ and an isomorphism $\hat{S}\colon SQ_X \to E_1 / E_2$ such that the compression $\pi_C$ of $\hat{\tilde{\pi}}$ satisfies
	\[ \tilde{u}(f) = \hat{S}^{-1} \pi_C(f) \hat{S} \qquad \text{for all } f \in H^{\infty}(\Sigma_{\psi}). \]
Note that $E_1$ and $E_2$ can be seen as subspaces of $\hat{Z}$. Then the assertion follows by inserting $f = e_z$ for $z \in \Sigma_{\frac{\pi}{2}- \psi}$.
\end{proof}

\begin{remark} Notice that in the special case where $X$ is an $L_p$-space the semigroup $(\Pi(z))$ lives on (a closed vector sublattice of) an ultraproduct of $L_p$-spaces which, by Kakutani's theorem~\cite[Theorem 1.b.2]{LinTza79}, is order isometric to an $L_p$-space. In this case one can therefore realize $\hat{X}$ as an $L_p$-space as well.
\end{remark}

\section{Strong Continuity}

Note that there is one drawback of the ultraproduct construction used above. In general the obtained semigroup $(\pi(e_z))_{z \in \Sigma}$ is not strongly continuous. However, the semigroup is a \emph{bounded analytic semigroup}, that is a bounded holomorphic mapping from the sector $\Sigma$ to the Banach space $\hat{X}$ satisfying the semigroup law. Since the space $\hat{X}$ is reflexive one knows that $P \coloneqq \lim_{t \downarrow 0} \pi(e_t)$ exists strongly and is a projection onto $X_1 \coloneqq P\hat{X}$~\cite[beginning of §5]{Are01}. Then the restriction $\pi(e_z)_{|X_1}$ defines a strongly continuous semigroup on $X_1$, whereas $\pi(e_z)_{|X_0} = 0$, where $X_0 \coloneqq (\Id - P)(\hat{X})$. In the next theorem we show that we can always reduce to this strongly continuous part.

\begin{theorem}\label{thm:generic} Let $-A$ be the generator of a bounded analytic $C_0$-semigroup $(T(z))$ on a subspace-quotient $SQ_X$ of a UMD-Banach lattice $X$ such hat $\theta \coloneqq \omega_{H^{\infty}}(A) < \frac{\pi}{2}$. Then for each $\psi \in (\theta, \frac{\pi}{2})$ there exists a UMD-Banach lattice $X_1$ and a bounded analytic $C_0$-semigroup $(R(z))_{z \in \Sigma_{\frac{\pi}{2} - \psi}}$ on $X_1$ which is positive and contractive on the real line together with subspaces $N \subset M \subset X_1$ which are invariant under $(R(z))$ and an isomorphism $S\colon SQ_X \to M/N$ such that the induced semigroup $(\hat{R}(z))_{z \in \Sigma_{\frac{\pi}{2} - \psi}}$ on $M/N$ satisfies
	\[ T(z) = S^{-1} \hat{R}(z) S \qquad \text{for all } z \in \Sigma_{\pi/2 - \psi}. \]
Moreover, if $X$ is separable, then $X_1$ can be chosen separable as well.
\end{theorem}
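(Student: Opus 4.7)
Proposition~\ref{prop:first_factorization} already supplies everything in the theorem except strong continuity: a UMD-Banach lattice $\hat{X}$ (separable if $X$ is), a bounded analytic semigroup $(\Pi(z))_{z \in \Sigma_{\pi/2-\psi}}$ on $\hat{X}$ that is positive and contractive on the real line, invariant subspaces $N \subset M \subset \hat{X}$, and an isomorphism $S \colon SQ_X \to M/N$ intertwining $T$ and $\hat{\Pi}$. My plan is to peel off the non-strongly-continuous part using the projection described in the preamble to the theorem. Since $\hat{X}$ is reflexive, the strong limit $P := \lim_{t \downarrow 0}\Pi(t)$ exists and is a bounded projection; set $X_1 := P\hat{X}$ and $R(z) := \Pi(z)|_{X_1}$. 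Using analyticity of $\Pi$ on the open sector together with the semigroup identity, one checks $P\Pi(z) = \Pi(z)P = \Pi(z)$, so that in fact $\Pi(z)(\hat{X}) \subset X_1$ for every $z$ in the open sector. Hence $(R(z))$ is a well-defined bounded analytic semigroup on $X_1$ which is strongly continuous at $0$ (indeed, for $x \in X_1$, $R(t)x = \Pi(t)x \to Px = x$), and clearly inherits positivity and contractivity on $\mathbb{R}_+$ from $(\Pi(z))$.

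Next I would transfer the factorization. The identity $T(z) = S^{-1}\hat{\Pi}(z)S$ together with strong continuity of $T$ at $0$ forces $\hat{\Pi}(t) \to \mathrm{Id}_{M/N}$ strongly as $t \downarrow 0$; since the strong limit of $\hat{\Pi}(t)$ is the operator $\hat{P}$ induced by $P$ on $M/N$, this gives $\hat{P} = \mathrm{Id}_{M/N}$. Set $M_1 := PM = M \cap X_1$ and $N_1 := PN = N \cap X_1$; both are closed and $R(z)$-invariant since $\Pi$ preserves $M$ and $N$ and commutes with $P$. The natural injection $M_1/N_1 \hookrightarrow M/N$ is surjective: given $m \in M$, $Pm \in M_1$ and $Pm - m \in N$ because $\hat{P} = \mathrm{Id}$ on $M/N$. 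Composing $S$ with the inverse of this canonical isomorphism yields the isomorphism $S_1 \colon SQ_X \to M_1/N_1$, and the identity $R(z)|_{M_1} = \Pi(z)|_{M_1}$ shows that $S_1$ intertwines $T$ with the induced semigroup $\hat{R}$.

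The main obstacle is making sure $X_1$ is again a UMD-Banach lattice. As a closed subspace of $\hat{X}$, $X_1$ automatically inherits UMD, so the substantive issue is the lattice structure: $X_1$ need not be a sublattice of $\hat{X}$. Here I would invoke the classical fact that the image of a positive contractive projection $P$ on a Banach lattice carries a canonical Banach lattice structure, with operations $x \vee_{X_1} y := P(x \vee_{\hat{X}} y)$ and analogously for $\wedge$, and that in this structure $R(z)$ is positive on $X_1$ whenever $\Pi(z)$ is positive on $\hat{X}$. In the concrete setting most relevant here, where $X = L_p$, the remark following Proposition~\ref{prop:first_factorization} lets us realize $\hat{X}$ as an $L_p$-space; Ando's theorem on positive contractive projections on $L_p$ then identifies $X_1$ with an $L_p$-space, which is visibly a UMD-Banach lattice. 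Separability of $X_1$ is inherited from $\hat{X}$, completing the reduction.
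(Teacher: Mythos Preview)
Your argument is correct and in fact streamlines the paper's proof. Both proofs start the same way: apply Proposition~\ref{prop:first_factorization}, form the projection $P=\lim_{t\downarrow 0}\Pi(t)$, set $X_1=P\hat X$ and $R(z)=\Pi(z)|_{X_1}$, and use that the range of a positive contractive projection on a Banach lattice is again a Banach lattice (the paper cites \cite[Theorem~5.59]{AbrAli02} for exactly this). The difference is in how the factorization is transferred to $X_1$. You observe that the induced projection $\hat P$ on $M/N$ equals the identity, because $\hat\Pi(t)=ST(t)S^{-1}\to \mathrm{Id}$ strongly; hence the canonical map $PM/PN\to M/N$ is already an isomorphism, and one simply composes it with $S$. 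The paper does not record this observation; instead it writes $u(a)=V_2\hat\pi(a)V_1$ with $V_1\colon SQ_X\to PM/PN$ and $V_2\colon PM/PN\to SQ_X$ and then invokes \cite[Proposition~4.2]{Pis01} to upgrade this factorization to a similarity through a further subspace-quotient $E_1/E_2$ of $PM/PN$. Your observation shows that $V_1$ and $V_2$ are already mutual inverses, so this appeal to Pisier's proposition is superfluous here and the extra layer $E_1/E_2$ is not needed. Your route is therefore genuinely shorter and yields the same conclusion with the invariant subspaces $PM,PN\subset X_1$ directly. The remaining points (UMD passes to subspaces, separability of $X_1$, positivity of $R(t)$ for the induced order on $X_1$) are handled identically in both arguments.
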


\begin{proof} Let $P \coloneqq \lim_{t \downarrow 0} \pi(e_t)$ in the strong sense be the projection onto $X_1$ as above. It follows directly from the definition that $P$ is a positive contractive projection. Hence, $X_1$ is a lattice subspace and therefore itself a Banach lattice with the induced order structure~\cite[Theorem~5.59]{AbrAli02}. Since the UMD-property passes to subspaces, $X_1$ is a UMD-Banach lattice. Let $(R(z))$ be the restriction of $(\Pi(z))$ to $X_1$ given by Proposition~\ref{prop:first_factorization}. By construction $(R(z))$ is a bounded analytic $C_0$-semigroup on $X_1$ and positive and contractive on the real line. 
Let $\mathcal{A}$ be the closed unital subalgebra of $H^{\infty}(\Sigma_{\psi})$ generated by $e_z$, that is $\mathcal{A} = \overline{\linspan}\{e_z: z \in \Sigma_{\psi} \cup \{0\} \}$. It follows from the discussion above and the continuity of $f \mapsto \pi(f)$ that $\pi(\mathcal{A})X_1 \subset X_1$ (we replace $\pi$ by its extension constructed in Proposition~\ref{prop:first_factorization} if $A$ is merely pseudo-sectorial). Furthermore, note that $\pi(\mathcal{A})$ leaves $PM$ and $PN$ invariant and that $P$ restricts to a projection on $M$ and $N$ by the invariance under the semigroup. This allows us to define maps 
	\[ V_1\colon SQ_X \xrightarrow[S]{} M/N \xrightarrow[P]{} PM/PN, \quad V_2\colon PM/PN \hookrightarrow M/PN \to M/N \xrightarrow[S^{-1}]{} SQ_X. \]
	As $\pi(\mathcal{A})$ acts only non-trivially on $X_1$ one obtains
	\[ u(a) = V_2 \hat{\pi}(a) V_1 \qquad \text{for all } a \in \mathcal{A}, \]
	where the hat indicates the mapping $PM/PN \to PM/PN$ induced by $\pi(a)_{|X_1}$. We now apply~\cite[Proposition~4.2]{Pis01} which shows that there exist $\hat{\pi}$-invariant subspaces $E_2 \subset E_1 \subset PM/PN$ and an isomorphism $\tilde{S}\colon SQ_X \xrightarrow{\sim} E_1/E_2$ such that the compression $\tilde{\pi}$ of $\hat{\pi}$ to $E_1/E_2$ satisfies
	\[ u(a) = \tilde{S}^{-1} \tilde{\pi}(a) \tilde{S} \qquad \text{for all } a \in \mathcal{A}. \]
Note that $E_1/E_2$ is a subspace-quotient of $X_1$ and that one has in particular
	\[ T(z) = \tilde{S}^{-1} \tilde{\pi}(a) \tilde{S} \qquad \text{for all } a \in \mathcal{A}. \qedhere \]
\end{proof}

\begin{remark}
	Note that the above arguments also work for general UMD-Banach spaces instead of UMD-Banach lattices. The underlying Banach space $X_1$ of the constructed semigroup $(R(z))$ inherits properties from the Banach space $X$ if they are stable under the constructed ultraproduct as well as under quotients and subspaces. For example, if $X$ is uniformly convex, then the ultraproduct $\hat{X}$ (for $p = 2$) is uniformly convex as well (with the same modulus of uniform convexity) because it is finitely representable in the uniformly convex space $\ell_p(L_p(\IR;Y))$ (see~\cite[Corollary~1]{Day41b}). As uniform convexity also passes to subspaces and quotients, this shows that if $-A$ generates a bounded analytic semigroup $(T(z))_{z \in \Sigma}$ on $X$ with $\omega_{H^{\infty}}(A) < \frac{\pi}{2}$, then there exists an isomorphism $S\colon X \to Z$ to a uniformly convex Banach space $Z$ such that $\norm{ST(t)S^{-1}}_{\mathcal{B}(Z)} \le 1$ for all $t \ge 0$.
\end{remark}

\begin{remark}
	Note that if the semigroup is defined on a Hilbert space, then the used ultraproduct construction also yields a Hilbert space. The subspace quotient $M/N$ in Theorem~\ref{thm:generic} then is a Hilbert space as well. So in this case $S$ is an isomorphism between two Hilbert spaces. Moreover, the constructed semigroup is even contractive on the whole sector by Remark~\ref{rem:shift_power_contractive_sector}. Therefore our general construction recovers Le Merdy's result in the Hilbert space case.
\end{remark}

\begin{remark}
	The assumption $\omega_{H^{\infty}}(A) < \frac{\pi}{2}$ is crucial. In \cite[Proposition 4.8]{Mer98}, Le Merdy gives an example of a bounded $C_0$-semigroup with an invertible generator $-A$ satisfying $\omega_{H^{\infty}}(A) = \frac{\pi}{2}$ on a Hilbert space which is not similar to a contraction.
	
	In the same spirit, a classical counterexample by P.R. Chernoff~\cite{Che76} shows that there exists a semigroup on some Hilbert space with a bounded generator which is bounded on the real line but not similar to a contractive semigroup.
\end{remark}

\begin{corollary}\label{cor:generic_lp} Let $-A$ be the generator of a bounded analytic $C_0$-semigroup $(T(z))$ on a subspace quotient $SQ_{L_p}$ of some $L_p$-space $L_p(\Omega)$ for $p \in (1,\infty)$ such that $\theta \coloneqq \omega_{H^{\infty}}(A) < \frac{\pi}{2}$. Then for each $\psi \in (\theta, \frac{\pi}{2})$ there exists an $L_p$-space $L_p(\tilde{\Omega})$ and a bounded analytic $C_0$-semigroup $(R(z))_{z \in \Sigma_{\frac{\pi}{2} - \psi}}$ on $L_p(\tilde{\Omega})$ which is contractive and positive on the real line together with subspaces $N \subset M \subset L_p(\tilde{\Omega})$ which are invariant under $(R(z))$, and an isomorphism $S\colon SQ_{L_p} \to M/N$ such that the induced semigroup $(\hat{R}(z))_{z \in \Sigma_{\frac{\pi}{2} - \psi}}$ on $M/N$ satisfies
	\[ T(z) = S^{-1} \hat{R}(z) S \qquad \text{for all } z \in \Sigma_{\pi/2 - \psi}. \]
Moreover, if $L_p(\Omega)$ is separable, then $L_p(\tilde{\Omega})$ can be chosen separable as well.
\end{corollary}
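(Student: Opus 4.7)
The plan is to deduce this from Theorem~\ref{thm:generic} by observing that, in the special case $X = L_p(\Omega)$, the UMD-Banach lattice $X_1$ provided by that theorem can be identified with a concrete $L_p$-space $L_p(\tilde{\Omega})$. This identification is already foreshadowed in the remark following Proposition~\ref{prop:first_factorization}; the work here is to put those observations together and verify that they survive the subsequent projection step in the proof of Theorem~\ref{thm:generic}.

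First I would trace through the ultraproduct construction with $X = L_p(\Omega)$. Setting $Y \coloneqq L_p([0,1];L_p(\Omega))$ yields, up to order isometry, $Y = L_p([0,1] \times \Omega)$, so that each space $L_p(\Omega_j; L_p(\IR; Y))$ appearing in Proposition~\ref{prop:first_factorization} is a concrete $L_p$-space. The $p$-norm ultraproduct of a family of $L_p$-spaces continues to satisfy the Kakutani abstract characterization of $L_p$-spaces, cited as~\cite[Theorem~1.b.2]{LinTza79} in the remark after Proposition~\ref{prop:first_factorization}, and is therefore order isometric to an $L_p$-space. Hence the ambient ultraproduct $\hat{X}$ can already be realized as an $L_p$-space.

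Next I would pass from $\hat{X}$ to the strongly continuous part $X_1$ used in Theorem~\ref{thm:generic}. There $X_1 = P\hat{X}$, where $P = \lim_{t \downarrow 0} \pi(e_t)$ is a positive contractive projection. By the classical theorem of Ando on ranges of positive contractive projections on $L_p$-spaces for $p \in (1,\infty)$, $P\hat{X}$ equipped with the inherited order is again order isometric to an $L_p$-space $L_p(\tilde{\Omega})$. Substituting this identification into the conclusion of Theorem~\ref{thm:generic} immediately yields the positive contractive analytic semigroup $(R(z))$ on $L_p(\tilde{\Omega})$, the invariant subspaces $N \subset M \subset L_p(\tilde{\Omega})$ and the isomorphism $S\colon SQ_{L_p} \to M/N$ satisfying $T(z) = S^{-1} \hat{R}(z) S$.

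For the separability clause I would invoke the separable version of Proposition~\ref{prop:first_factorization}: if $L_p(\Omega)$ is separable, that proposition produces a separable $\hat{X}$, and the projection $P$ restricts to a positive contractive projection on it, giving a separable $X_1$; a separable abstract $L_p$-lattice is order isometric to $L_p$ over a standard measure space. The only genuine work beyond invoking Theorem~\ref{thm:generic} is thus locating the two structural identifications — ultraproducts of $L_p$-spaces remain $L_p$-spaces and ranges of positive contractive projections on $L_p$ are $L_p$-spaces — and this is the part where the most care is needed, though both results are classical and already pointed to in the paper.
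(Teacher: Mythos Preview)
Your proposal is correct and follows essentially the same route as the paper. The paper's proof is even more terse: it notes that $P$ is a positive contractive projection on an $L_p$-space, invokes~\cite[Problem~5.3.12]{AbrAli02b} to conclude that $\Im P$ is a closed vector sublattice, and then applies Kakutani's theorem; this is the same content as the Ando theorem you cite, just decomposed into two steps.
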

\begin{proof}
	Note that in this special case $P$ is a positive and contractive projection defined on some $L_p$-space. It is known that in this case $\Im P$ is a closed vector sublattice~\cite[Problem~5.3.12]{AbrAli02b} of an $L_p$-space and therefore order isometric to an $L_p$-space by Kakutani's theorem.
\end{proof}

\begin{remark}
	We do not know whether in the case $SQ_{L_p} = L_p(\Omega)$ the above corollary remains true without passing to a subspace-quotient, i.e. whether one always has
		\[ T(z) = S^{-1} R(z) S \qquad \text{for all } z \in \Sigma_{\pi/2 - \psi} \]
	for a bounded analytic $C_0$-semigroup $(R(z))$ one some $L_p$-space which is positive and contractive on the real line.
\end{remark}

In the case where the semigroup is defined on a subspace-quotient of an $L_p$-space one obtains a particularly nice equivalence.

\begin{corollary}
	Let $-A$ be the generator of a bounded analytic $C_0$-semigroup $(T(z))$ on a subspace-quotient $SQ_{L_p}$ of an $L_p$-space $L_p(\Omega)$. Then the following are equivalent.
	
	\begin{equiv_enum}
		\item\label{char_hinfty_i} The sectorial operator A has a bounded $H^{\infty}$-calculus with $\omega_{H^{\infty}}(A) < \frac{\pi}{2}$.
		\item\label{char_hinfty_ii} There exists a bounded analytic $C_0$-semigroup $(R(z))_{z \in \Sigma}$ on an $L_p$-space $L_p(\tilde{\Omega})$ which is contractive and positive on the real line together with $(R(z))$-invariant subspaces $N,M$ with $N \subset M$ and an isomorphism $S\colon SQ_{L_p} \to M/N$ such that the induced semigroup $(\hat{R}(z))_{z \in \Sigma}$ on $M/N$ satisfies
			\[ T(z) = S^{-1} \hat{R}(z) S \qquad \text{for all } z \in \Sigma. \]
	\end{equiv_enum}
	Moreover, if $L_p(\Omega)$ is separable, then so is $L_p(\tilde{\Omega})$.
\end{corollary}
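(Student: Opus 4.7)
The plan is to prove the two implications separately. The implication (i) $\Rightarrow$ (ii) is essentially a restatement of Corollary~\ref{cor:generic_lp}, while (ii) $\Rightarrow$ (i) comes from Weis' theorem quoted in the introduction combined with the routine fact that a bounded $H^{\infty}(\Sigma_{\theta})$-calculus is preserved under the three structural operations appearing in (ii).

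For (i) $\Rightarrow$ (ii), I would simply pick any $\psi \in (\omega_{H^{\infty}}(A), \tfrac{\pi}{2})$ and apply Corollary~\ref{cor:generic_lp}. This yields an $L_p$-space $L_p(\tilde{\Omega})$ (separable whenever $L_p(\Omega)$ is), a bounded analytic $C_0$-semigroup $(R(z))_{z \in \Sigma_{\pi/2-\psi}}$ on $L_p(\tilde{\Omega})$ that is positive and contractive on the real line, invariant subspaces $N \subset M \subset L_p(\tilde{\Omega})$ and an isomorphism $S$ realizing $T(z) = S^{-1}\hat{R}(z)S$ on $\Sigma_{\pi/2 - \psi}$. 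Setting $\Sigma \coloneqq \Sigma_{\pi/2 - \psi}$ gives exactly the data required in (ii).

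For (ii) $\Rightarrow$ (i), the negative generator $-A_R$ of $(R(z))$ lives on the $L_p$-space $L_p(\tilde{\Omega})$ and generates a bounded analytic semigroup that is positive and contractive on the real line, so by Weis' theorem (\cite[Remark~4.9c)]{Wei01}) it has a bounded $H^{\infty}(\Sigma_{\theta_0})$-calculus for some $\theta_0 < \tfrac{\pi}{2}$. I would then argue that each of the three operations connecting $A_R$ to $A$ preserves this property with the same angle. First, since $M$ is a closed $(R(z))$-invariant subspace, the Laplace transform formula shows that the resolvent $(\lambda - A_R)^{-1}$ preserves $M$ for $\Re \lambda$ large, and hence for all $\lambda \notin \overline{\Sigma_{\omega(A_R)}}$ by analytic continuation; consequently the Dunford integral defining $f(A_R)$ for $f \in H_0^{\infty}(\Sigma_{\theta_0})$ also preserves $M$ and restricts to $f(A_{R,M})$, yielding the same calculus bound on $M$. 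The same argument applied to $N \subset M$ shows that $f(A_{R,M})$ descends to $f(\hat{A}_R)$ on $M/N$ with the same bound. Finally, similarity through $S$ gives $f(A) = S^{-1} f(\hat{A}_R) S$, so $A$ has a bounded $H^{\infty}(\Sigma_{\theta_0})$-calculus on $SQ_{L_p}$, which yields $\omega_{H^{\infty}}(A) \le \theta_0 < \tfrac{\pi}{2}$.

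The only slightly delicate point is verifying that the induced operator on $M/N$ is genuinely pseudo-sectorial with the expected resolvent and that the Dunford contour integrals commute with restriction to $M$ and with the quotient map onto $M/N$. These are standard facts about invariant subspaces of pseudo-sectorial operators (the resolvent identity together with strong continuity of the contour integral makes them immediate), so no serious obstacle is expected; the content of the corollary lies essentially in Corollary~\ref{cor:generic_lp} and in Weis' theorem, which together make the equivalence a clean packaging of what has already been proved.
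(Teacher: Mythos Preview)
Your proposal is correct and follows essentially the same approach as the paper: (i) $\Rightarrow$ (ii) is Corollary~\ref{cor:generic_lp}, and (ii) $\Rightarrow$ (i) is Weis' theorem combined with the stability of the $H^{\infty}$-calculus under invariant subspace-quotients and similarity. The paper simply states this stability as a known fact, while you spell out the resolvent/Dunford-integral argument, but the logic is identical.
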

	
\begin{proof}
	Note that the boundedness of an $H^{\infty}(\Sigma_{\theta})$-calculus passes through invariant subspace-quotients and is preserved by similarity transforms. Hence, \ref{char_hinfty_ii}~implies~\ref{char_hinfty_i} as every bounded analytic semigroup on some $L_p$-space %
	which is contractive and positive on the real line possesses a bounded $H^{\infty}$-calculus of angle lesser than $\frac{\pi}{2}$ by a result of Weis~\cite[Remark~4.9c)]{Wei01}.
	
	Conversely, \ref{char_hinfty_i} implies \ref{char_hinfty_ii} by Corollary~\ref{cor:generic_lp}.
\end{proof}

\begin{remark}
	It seems to be unknown whether the result of Weis holds on general UMD-Banach lattices. If this question has a positive answer, then the above characterization would also hold for general UMD-Banach lattices.
\end{remark}

\bibliographystyle{amsalpha}
\bibliography{Literatursammlung}{}

\end{document}